\author{Andy Hammerlindl}
\address{School of Mathematical Sciences, Monash University, Victoria 3800 Australia} \urladdr{ http://users.monash.edu.au/~ahammerl/}  \email{andy.hammerlindl@monash.edu}
\title{Bundle switching in higher dimensions}
\date{\today}
\crefname{figure}{figure}{figure}
\Crefname{figure}{Figure}{Figure}
\def\saveenum{\xdef\@savedenum{\the\c@enumi\relax}}
\def\resetenum{\global\c@enumi\@savedenum}
    \newcommand{\bbR}{\mathbb{R}}
    \newcommand{\bbZ}{\mathbb{Z}}
    \newcommand{\bbN}{\mathbb{N}}
    \newcommand{\bbT}{\mathbb{T}}
    \newcommand{\subof}{\subset}
    \newcommand{\ti}{\times}
    \newcommand{\sans}{\setminus}
    \newcommand{\Es}{E^s}
    \newcommand{\Ec}{E^c}
    \newcommand{\Eu}{E^u}
    \newcommand{\Ecu}{E^{cu}}
    \newcommand{\Ecs}{E^{cs}}
    \newcommand{\Esf}{E^s_f}
    \newcommand{\Ecf}{E^c_f}
    \newcommand{\Euf}{E^u_f}
    \newcommand{\Ecsf}{E^{cs}_f}
    \newcommand{\Ecuf}{E^{cu}_f}
    \newcommand{\Esg}{E^s_g}
    \newcommand{\Ecg}{E^c_g}
    \newcommand{\Eug}{E^u_g}
    \newcommand{\Ecsg}{E^{c\mykern s}_g}
    \newcommand{\Ecug}{E^{c\mykern u}_g}
    \newcommand{\Essg}{E^{s\mykern s}_g}
    \newcommand{\Ewsg}{E^{w\! s}_g}
    \newcommand{\Eccg}{E^{c\mykern c}_g}
    \newcommand{\vsss}{v_{s\mykern s}}
    \newcommand{\vws}{v_{w\mykern s}}
    \newcommand{\vcu}{v_{c\mykern u}}
    \newcommand{\Esphi}{E^s_\phi}
    \newcommand{\Ecphi}{E^c_\phi}
    \newcommand{\Euphi}{E^u_\phi}
    \newcommand{\Wsg}{W^s_g}
    \newcommand{\cF}{\mathcal{F}}
    \newcommand{\Aone}{\mathcal{A}}
    \newcommand{\Bone}{\mathcal{B}}
    \newcommand{\Cone}{\mathcal{C}}
    \newcommand{\Eone}{\mathcal{E}}
    \newcommand{\Uone}{\mathcal{U}}
    \newcommand{\cc}{\subset \subset}
    \newcommand{\ep}{\epsilon}
    \newcommand{\lam}{\lambda}
    \newcommand{\gam}{\gamma}
    \newcommand{\sig}{\sigma}
    \newcommand{\al}{\alpha}
    \newcommand{\bt}{\beta}
    \newcommand{\qandq}{\quad \text{and} \quad}
    \newcommand{\length}{\operatorname{length}}
    \newcommand{\mykern}{\mkern-0.5mu}
\numberwithin{equation}{section}
\newtheorem{thm}[equation]{Theorem}
\newtheorem{cor}[equation]{Corollary}
\newtheorem{lemma}[equation]{Lemma}
\newtheorem{prop}[equation]{Proposition}
\newtheorem{question}[equation]{\textbf{Question}}
\newtheorem{addendum}[equation]{\textbf{Addendum}}
\theoremstyle{remark}
\newtheorem*{remark} {\textbf{Remark}}
\newtheorem*{notation} {\textbf{Notation}}
\providecommand{\acknowledgement}{{\noindent \textbf{Acknowledgements.}}\quad}
\begin{document}

%


\begin{abstract}
    Assuming it preserves an orientation of its stable bundle,
    any three-dimensional partially hyperbolic diffeomorphism can be used to
    construct a four-dimensional partially hyperbolic diffeomorphism which is
    dynamically incoherent.
    Under the same assumption,
    the time-one map of any three-dimensional Anosov flow can be used to
    construct a four-dimensional diffeomorphism which is both absolutely
    partially hyperbolic and dynamically incoherent.
    Further results hold in higher dimensions 
    under additional assumptions.
\end{abstract}
\maketitle

\section{Introduction} 

Partially hyperbolic systems are an intensively studied class of
cha\-otic dyna\-mical systems, with close links to robust transitivity and stable
ergodicity \cite{buhbook}. In recent years, significant progress has been made
understanding and classifying partially hyperbolic systems in dimension 3
\cite{hp2018survey, crhrhu2018survey, 1908.06227, 2008.04871}.
However, partial hyperbolicity in higher dimensions is far less understood.
This paper looks at the question of dynamical coherence for partially
hyperbolic systems in dimensions 4 and higher.

A diffeomorphism $f : M \to M$ on a closed Riemannian manifold $M$ is
\emph{partially hyperbolic} if
there are $k \ge 1$ and an invariant splitting
of the tangent bundle $TM$ into three subbundles
$TM = \Es \oplus \Ec \oplus \Eu$ such that
\[  \| Df^k v^s \| < 1 < \| Df^k v^u \|
    \qandq
    \| Df^k v^s \| < \| Df^k v^c \| < \| Df^k v^u \|
\]
for all $p \in M$ and unit vectors
$v^s \in \Es_p$, $v^c \in \Ec_p$, and $v^u \in \Eu_p$.
Up to replacing the metric on $M,$ we can freely assume that $k = 1.$

There are always invariant foliations tangent to the stable direction $\Es$
and the unstable direction $\Eu,$
but there may or may not be a foliation tangent to the center direction.
A partially hyperbolic diffeomorphism is \emph{dynamically coherent}
if there are invariant foliations tangent to the
center-stable $\Ecs = \Ec \oplus \Es$ and
center-unstable $\Ecu = \Ec \oplus \Eu$ bundles.
The intersection of these two foliations then yields
an invariant center direction.

Determining whether or not a system is dynamically coherent
is often a key first step in understanding and classifying
its dynamical behaviour.
For decades, it was an open question if a partially hyperbolic system
with a one-dimensional center was necessarily dynamically coherent.
This was finally answered in the negative by
F.~Rod\-riguez-Hertz, J.~Rodriguez-Hertz, and R.~Ures \cite{RHRHU-nondyn}.
They constructed a partially hyperbolic system on the 3-torus $\bbT^3$
having an invariant embedded 2-torus $\bbT^2$ tangent to $\Ecu$
and further showed that there was no foliation tangent to $\Ecu.$
This is an example of a \emph{cu-submanifold},
a compact embedded submanifold tangent to $\Ecu$.
A \emph{cs-submanifold} is defined analogously.

For partially hyperbolic systems in dimension 3,
having a 2-dimensional \emph{cs} or \emph{cu}-submanifold places
severe restrictions on the manifold $M$ supporting $f$ \cite{RHRHU-tori}.
Moreover, the dynamics of these systems have been completely classified
\cite{hp2019class}.

In this paper, we investigate the construction of dynamically incoherent
examples in dimensions 4 and higher,
showing that the situation here is very different from dimension 3.
In particular, any partially hyperbolic diffeomorphism $g$ in dimension 3
has a one-dimensional stable bundle.
If $g$ preserves the orientation of its stable bundle,
then it can be used to construct
a dynamically incoherent example in dimension 4.
More generally, we have the following.

\begin{thm} \label{thm:oneswitch}
    Suppose that $g : M \to M$ is a partially hyperbolic diffeomorphism such that
    the stable bundle $\Esg$ is one-dimensional
    and $g$ preserves the orientation of $\Esg.$
    Then there is a partially hyperbolic diffeomorphism
    $f : M \ti S^1 \to M \ti S^1$ such that
    \begin{enumerate}
        \item $M \ti \{0\}$ is a cu-submanifold,
        \item
        $f(x,0) = (g(x), 0)$ for all $x \in M,$ and
        \item
        $f$ is dynamically incoherent.
    \end{enumerate} \end{thm}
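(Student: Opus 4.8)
The plan is to realise $f$ as a perturbation of a skew product $g \ti \phi$, where $\phi \colon S^1 \to S^1$ is a Morse--Smale diffeomorphism, homotopic to the identity, with an attracting fixed point at $0$. Since $g$ is only assumed partially hyperbolic, fix uniform constants $0 < \bar\lambda_s < 1 < \bar\lambda_u$ with $\|Dg\,v^s\| < \bar\lambda_s$ and $\bar\lambda_u < \|Dg\,v^u\|$ for all unit $v^s \in \Esg$ and $v^u \in \Eug$ (choosing $\bar\lambda_u$ also above the norm of $Dg$ on a narrow centre cone), and then take $\mu := \phi'(0)$ strictly below the infimum over $M$ of the contraction rate of $g$ along $\Esg$. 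Because $\phi$ is a degree-one circle diffeomorphism its derivative averages to $1$, so $\phi'$ can be taken below $\bar\lambda_u$ everywhere while being as small as $\mu$ near $0$. With this choice the vertical direction $\bbR\partial_t$ is contracted by $\mu$ along $M \ti \{0\}$, more strongly than $g$ contracts $\Esg$; thus on $M \ti \{0\}$ the strong stable direction of $f$ should be $\bbR\partial_t$ and $\Esg$ should be demoted to a centre direction, whereas for $t$ bounded away from $0$ the circle direction is too weakly contracted (or is expanded) to be stable and the strong stable direction should revert to $\Esg$. The intended splitting is then $\Euf \approx \Eug$, $\Ecf$ one dimension larger than $\Ecg$ and containing $\Ecg$ together with $\bbR\partial_t$, and $\Esf$ rotating between $\bbR\partial_t$ on $M \ti \{0\}$ and $\Esg$ away from it; producing this rotation of $\Esf$ is the ``bundle switch'' of the title and is the core of the construction.

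For a genuine product $g \ti \phi$ the splitting is ambiguous on the hypersurface where $\phi'(t)$ meets the contraction rate of $g$ along $\Esg$, since there $\bbR\partial_t$ and $\Esg$ are contracted at the same rate. I remove this ambiguity by a perturbation supported in a tubular neighbourhood $M \ti (-\ep, \ep)$ of $M \ti \{0\}$: using the $g$-invariant orientation of $\Esg$, let $v$ be the unit section of $\Esg$ in the chosen direction, and shear each circle fibre $\{x\} \ti S^1$ in the direction $v$ by an amount $\beta(t)$ that vanishes to second order at $t = 0$ and is nonzero on the above crossing locus. That the orientation is $g$-invariant --- that is, $Dg\,v(x)$ is a \emph{positive} multiple of $v(g(x))$ --- is exactly what makes this shear equivariant, and hence consistent to iterate; were the orientation reversed, the shear direction could not be chosen coherently over $M$, which is why the hypothesis is needed, and which suggests the orientation-reversing case requires a genuinely different construction. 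The shear couples $\bbR\partial_t$ with $\Esg$ across the crossing, turning the would-be jump of $\Esf$ into a smooth rotation with no indeterminacy in between; it may be taken $C^1$-small, as all that is required is that it be nonzero where the two contraction rates coincide.

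Partial hyperbolicity of $f$ is then checked by the standard cone-field criterion: an unstable cone around $\Eug$, which is robust since the perturbation can be taken $C^1$-small relative to the gap between $\bar\lambda_u$ and $\bar\lambda_s$; the steered strong-stable cone just described; and the complementary $\Ecuf$ and $\Ecsf$ cones. The rate inequalities hold because $\mu$ is below the $\Esg$-contraction rate near $t = 0$, because $\phi' < \bar\lambda_u$ everywhere, and because the shear is small. Along $M \ti \{0\}$ the set $M \ti \{0\}$ is $f$-invariant and $\beta$ vanishes to second order there, so $Df$ preserves the splitting $TM \oplus \bbR\partial_t$ and is block diagonal along it, with $\bbR\partial_t$-block equal to $\mu$, which is below the $\Esg$-contraction rate; hence $\Esf = \bbR\partial_t$ and $\Ecuf = TM$ at every point of $M \ti \{0\}$, so $M \ti \{0\}$ is a cu-submanifold, giving item (1). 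Item (2) is immediate, since $\phi(0) = 0$ and $\beta(0) = 0$ give $f(x,0) = (g(x),0)$.

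For dynamical incoherence, suppose $\Fcu$ were an $f$-invariant foliation tangent to $\Ecuf$. Since $M \ti \{0\}$ is compact, connected and everywhere tangent to $\Ecuf$, it would be a single leaf. Near $M \ti \{0\}$ the bundle $\Ecuf$ is transverse to $\Esf = \bbR\partial_t$, so the leaves near $M \ti \{0\}$ are locally graphs over $M$ and, with the one-dimensional stable foliation of $f$, form a local product structure along $M \ti \{0\}$. Let $L \ne M \ti \{0\}$ be the leaf through a point just above $M \ti \{0\}$; then $L$ is disjoint from $M \ti \{0\}$, yet, $0$ being attracting for $\phi$, the $t$-coordinates of the forward iterates $f^n(L)$ tend to $0$, so $f^n(L)$ accumulates on $M \ti \{0\}$ while staying graphical. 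As in \cite{RHRHU-nondyn} (see also \cite{RHRHU-tori}), this is incompatible with $\Fcu$ being a genuine foliation having $M \ti \{0\}$ as a compact leaf: the leaves $f^n(L)$ and the stable leaves of $f$ cannot maintain the local product structure along $M \ti \{0\}$. Hence no such $\Fcu$ exists, $f$ is dynamically incoherent, and item (3) holds. The main obstacle in all of this is the second step: arranging the shear so that the strong stable direction of $f$ genuinely and continuously rotates off $\bbR\partial_t$ onto $\Esg$ while the splitting stays continuous, $Df$-invariant and dominated across the whole transition zone --- small enough to preserve domination by $\Eug$, yet large enough to force the switch --- and making this choice coherently over all of $M$, which is exactly where the hypothesis that $g$ preserves the orientation of $\Esg$ is used.
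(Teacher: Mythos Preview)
Your overall architecture matches the paper's---a skew product over $g$ with a circle map having an attractor at $0$ and a repeller elsewhere, perturbed by a shear in the $E^s_g$ direction so that $E^s_f$ rotates from vertical on $M\times\{0\}$ to horizontal away from it. Two steps, however, do not go through as written.

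First, you shear along ``the unit section $v$ of $E^s_g$''. For a general partially hyperbolic $g$ the bundle $E^s_g$ is only H\"older continuous, so this $v$ is not $C^1$ and your $f$ is not a diffeomorphism. The paper confronts this explicitly: it replaces $v$ by a \emph{smooth} vector field $X$ lying in a thin cone $\mathcal A^+$ about $E^s_g$ and shears by the time-$\tau(z)$ map of its flow. This is not a cosmetic fix---once the shear is only approximately along $E^s_g$, one cannot write the invariant bundles of $f$ down by hand, and the paper instead verifies partial hyperbolicity by extending the explicit splittings on the two fixed slices $M\times\{0\}$ and $M\times\{1\}$ to all of $M\times S^1$ via a chain-recurrence criterion, tracking orbits of vectors through nested cone families $\mathcal A\subset\subset\mathcal B$ and $\mathcal C$. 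Your appeal to a ``standard cone-field criterion'' with a $C^1$-small shear supported near $t=0$ skips exactly this difficulty (and in the paper the shear sits in the transition region between the two fixed slices and is not small).

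Second, your incoherence argument does not name an obstruction. Having iterates $f^n(L)$ of a nearby leaf accumulate on the compact leaf $M\times\{0\}$ is entirely compatible with a foliation; nothing about local product structure is violated by accumulation alone. The paper's mechanism is sharper: for small $z>0$ the one-dimensional intersection $E^{cu}_f(x,z)\cap\bigl(E^s_g(x)\times\bbR\bigr)$ has a definite sign (their Lemmas on transversality and orientation), so a curve in a putative $cu$-leaf whose $M$-projection moves in the negative $E^s_g$ direction---a ``falling curve''---has strictly decreasing $z$-coordinate and reaches $z=0$ after \emph{uniformly bounded} projected length. A foliation chart around that endpoint would then contain nearby plaques crossing $z=0$; but the built-in symmetry $f(x,-z)=(x_1,-z_1)$ forces plaques on the $z<0$ side to fall toward $z=0$ as well, so no plaque can cross, a contradiction. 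Your sketch needs this monotonicity and finite-length argument (or an equivalent), not the accumulation heuristic.
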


We refer to the technique used both in \cite{RHRHU-nondyn} and
here in \cref{thm:oneswitch} as ``bundle switching''.
The construction, roughly speaking,
starts with a direct product $g \ti h : M \ti S^1 \to M \ti S^1$
where $h : S^1 \to S^1$ has a strong contraction at a fixed point $0 \in S^1$
and weak expansion at another fixed point $1 \in S^1.$
(Throughout this paper, we regard $S^1$ as a quotient of the interval $[-1, 1]$
identifying the endpoints $-1$ and $1$.)

On $M \ti \{0\},$ the strong contraction of $h$
means that the stable bundle is in the direction of $S^1$ fibers
and the center direction lies in the tangent space of $M \ti \{0\}.$

On $M \ti \{1\},$ the weak expansion of $h$
means that the $S^1$ fibers will be tangent to the center direction
and the stable direction lies in the tangent space of $M \ti \{1\}.$

To produce a global partially hyperbolic splitting,
the actual diffeomorphism $f : M \ti S^1 \to M \ti S^1$
applies a shear in the regions between $M \ti \{0\}$ and $M \ti \{1\}$
in order to ``switch'' the alignments of the center and stable bundles.

In the original construction in \cite{RHRHU-nondyn},
the manifold $M$ is the 2-torus $\bbT^2$ and
$g : \bbT^2 \to \bbT^2$ is given by a linear toral automorphism.
For this map, the stable bundle $\Esg$ is smooth (linear, in fact)
and the construction shears the dynamics exactly in the direction of $\Esg.$

For a general partially hyperbolic splitting,
$TM = \Esg \oplus \Ecg \oplus \Eug,$
the bundles are only H\"older continuous
and not $C^1$ \cite{hw1999prevalence}.
Therefore the construction in \cref{thm:oneswitch}
uses a shear along a smooth vector field approximating
the stable direction.
This means that proving that the resulting diffeomorphism $f$ is
partially hyperbolic is more delicate,
but the proof can still be achieved.

In the setting of \cref{thm:oneswitch}, if $h : M \to M$ is a diffeomorphism
which commutes with $g,$ then we can cut $M \ti S^1$
open along $M \ti \{0\}$ and then glue the two boundary components together
via $h$ instead of the identity. This gives a new example $f$
defined on the mapping torus $M_h$ instead of $M \ti S^1.$
Theorems \ref{thm:anosovswitch} and \ref{thm:multiswitch} below
can also be generalized in a similar manner.
However, we do not consider this type of generalization in any further detail in the
current paper.

\medskip{}

The definition of partial hyperbolicity we are using in this paper
is sometimes called ``pointwise'' partial hyperbolicity,
in contrast to a stricter notion of absolute partial hyperbolicity.
A partially hyperbolic diffeomorphism $f : M \to M$ is
\emph{absolutely partially hyperbolic} if there are global constants
$\eta < 1 < \mu$ such that 
\[
    \| Df^k v^s \| < \eta < \| Df^k v^c \| < \mu < \| Df^k v^u \|
\]
for all $p \in M$ and unit vectors
$v^s \in \Es_p$, $v^c \in \Ec_p$, and $v^u \in \Eu_p$.
Again, we may freely assume that $k = 1.$

 
For partially hyperbolic diffeomorphisms on the 3-torus,
this distinction between ``pointwise'' and ``absolute'' is important.
Any absolutely partially hyperbolic diffeomorphism on $\bbT^3$
is dynamically coherent \cite{BBI2}
and bundle switching cannot be used in this setting.

In higher dimensions,
the techniques used in the proof of \cref{thm:oneswitch} can be adapted
to construct absolutely partially hyperbolic
examples which are dynamically incoherent.
To do this, the construction uses Anosov flows.
Let $\phi$ be a flow generated by a $C^1$ vector field $X$ on a manifold $M.$
Such a flow $\phi$ is an \emph{Anosov flow}
if for any $t > 0,\ \phi_t$ is partially hyperbolic with
a one-dimensional center direction $\Ecphi$ given by the span of $X.$
An Anosov flow may be thought of as a parameterized family of partially
hyperbolic maps $\phi_t$ where the amount of hyperbolicity can be
dialled up and down by adjusting the parameter $t \in \bbR.$
This allows us to modify the proof of \cref{thm:oneswitch}
to prove the following.

\begin{thm} \label{thm:anosovswitch}
    Let $\phi_t : M \to M$ be an Anosov flow
    with a one-dimensional oriented strong stable bundle, $\Esphi.$
    Then there is an absolutely partially hyperbolic diffeomorphism
    $f : M \ti S^1 \to M \ti S^1$
    such that
    \begin{enumerate}
        \item $M \ti \{0\}$ is a cu-submanifold,
        \item
        $f(x,0) = (\phi_1(x),0)$ for all $x \in M,$ and
        \item
        $f$ is dynamically incoherent.
    \end{enumerate} \end{thm}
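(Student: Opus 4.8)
The plan is to rerun the construction behind \cref{thm:oneswitch}, but with the product $g \ti h$ replaced by a skew product over $S^1$ whose fibre maps are time-$\tau(s)$ maps of the flow $\phi$; the freedom to choose the function $\tau$ is exactly what should turn the pointwise estimates of that proof into the uniform estimates needed for \emph{absolute} partial hyperbolicity. Concretely, writing $S^1 = [-1,1]/(-1\sim 1)$ as in the paper, I would fix an orientation-preserving Morse--Smale circle diffeomorphism $h$ with an attracting fixed point at $0$ and a repelling fixed point at $1$, with $h'(0)$ as small as desired and $h'(1) > 1$ as close to $1$ as desired, and a smooth function $\tau : S^1 \to (0,\infty)$ with $\tau(0) = 1$ and $\tau \ge T$ outside a small neighbourhood $U_0$ of $0$, where $T$ is a large constant to be fixed. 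The starting map is $F(x,s) = (\phi_{\tau(s)}(x), h(s))$, which already satisfies $F(x,0) = (\phi_1(x),0)$, and I would also fix a small neighbourhood $U_1$ of $1$ for later use.

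Next I would set up the absolute cone estimates. Since $\phi$ is Anosov with one-dimensional centre spanned by its generator $X$, after adjusting the metric on $M$ one may assume $\|D\phi_t|_{\Esphi}\| \le e^{-\lam t}$ and $\|D\phi_t|_{\Euphi}\| \ge e^{\lam t}$ for all $t \ge 0$, with $\|D\phi_t|_{\Ecphi}\|$ pinched between two positive constants independent of $t$. Taking $T$ large then forces the stable rate $e^{-\lam\tau(s)}$ below a global constant $\eta < 1$ and the unstable rate $e^{\lam\tau(s)}$ above a global constant $\mu > 1$ for every $s$, while the centre rates — those coming from $\Ecphi$, and near $s = 1$ also the circle direction with modulus $h'(1)$ close to $1$ — remain inside the band $(\eta,\mu)$. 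Near $s = 0$ the circle direction $TS^1$, contracted by roughly $h'(0)$, is the most strongly contracted direction and will play the role of the stable line there, whereas outside $U_0$ that role passes to $\Esphi$. This is the step that genuinely uses the flow: a fixed partially hyperbolic map on $M$ carries a fixed amount of hyperbolicity, but $\phi_{\tau(s)}$ can be made as hyperbolic as needed to dominate, uniformly in $s$, the prescribed weak behaviour in the $S^1$ factor.

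The map $F$ itself is not partially hyperbolic, since the candidate stable line jumps from $TS^1$ near $s = 0$ to $\Esphi$ outside $U_0$. As in \cref{thm:oneswitch}, I would use that $\Esphi$ is one-dimensional and oriented to choose a smooth unit vector field $v$ on $M$ that $C^0$-approximates $\Esphi$, and compose $F$ with a shear along $v$ supported in the annulus $M \ti ([-1,1]\sans(U_0\cup U_1))$, carrying the splitting adapted to $M \ti \{0\}$ continuously onto the one adapted to $M \ti \{1\}$, and taking the shear $C^1$-small transverse to $v$ so the estimates above survive. The outcome should be a diffeomorphism $f$ with a dominated splitting $\Es \oplus \Ec \oplus \Eu$, where $\dim \Ec = \dim \Esphi + 1 = 2$ and $\Ec$ is spanned by the flow direction together with whichever of $\Esphi$ and $TS^1$ is not the local stable line; the rate estimates then make $f$ absolutely partially hyperbolic. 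Along $M \ti \{0\}$ one has $\Ec \oplus \Eu = (\Esphi \oplus \Ecphi) \oplus \Euphi = TM$, so $M \ti \{0\}$ is a cu-submanifold, and $f(x,0) = (\phi_1(x),0)$ by construction.

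Dynamical incoherence I would deduce as for \cref{thm:oneswitch}: if $f$ admitted an invariant foliation tangent to $\Ecs$, then for $p \in M \ti \{0\}$ the leaf through $p$ would meet the $f$-invariant annulus $M \ti \{0\}$ in a submanifold tangent to $\Ecs \cap T(M \ti \{0\}) = \Esphi \oplus \Ecphi = \Ecsphi$ — a weak-stable leaf of $\phi$ — while simultaneously extending off $M \ti \{0\}$ along the strong stable direction $TS^1$ of $f$ into the shearing region; iterating $f$ and $f\inv$, and using that $h$ drives the $S^1$-coordinate toward $0$, forces this configuration into a contradiction with the compactness and invariance of $M \ti \{0\}$. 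I expect the main obstacle to be the uniformity underlying the second and third steps, namely pinning down a single pair of constants $\eta < 1 < \mu$ that is respected simultaneously by the dialled-up hyperbolicity of $\phi_{\tau(s)}$ on $M$, by the fixed weak behaviour in the $S^1$ direction, and throughout the shearing region where the two candidate stable lines are interchanged; once $h$, $\tau$, $v$, and the shear are chosen to make that work, the cone-field verification of absolute partial hyperbolicity and the incoherence argument both follow the template of \cref{thm:oneswitch}.
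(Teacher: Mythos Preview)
Your construction is broadly on the right track and shares the paper's key idea: use the time parameter in the Anosov flow to make the hyperbolicity on $M$ as strong as needed away from $s=0$, so that on the chain-recurrent set $M \times \{0,1\}$ one obtains uniform constants $\eta < 1 < \mu$ separating the stable, centre, and unstable rates, and hence absolute partial hyperbolicity globally. The paper organises this somewhat differently --- it fixes $g = \phi_N$ for large $N$, runs the shear $\sigma_t$ exactly as in the proof of \cref{thm:oneswitch} on the region $z \ge h^2(a)$, and then inserts a separate ramp function $\rho$ taking the flow time from $N$ back down to $1$ on $[h^3(a), h^2(a)]$ --- but your single variable time $\tau(s)$ composed afterward with a shear would work similarly. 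One caution: the shear cannot be taken $C^1$-small in any useful sense, since its derivative in the $S^1$ direction contributes a term of the form $\tau'(z)\cdot X$ that is essential for the cone argument extending the dominated splitting across the switching region; what makes the absolute estimates survive is rather that the shear vanishes on the chain-recurrent set $M \times \{0,1\}$.

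The genuine gap is your incoherence argument. You propose to rule out a foliation tangent to $\Ecsf$, but the sketch you give does not produce a contradiction: a $cs$-leaf through $p \in M \times \{0\}$ may perfectly well meet $M \times \{0\}$ in a weak-stable leaf of $\phi$ and extend off along the $S^1$ fibre with no inconsistency under iteration --- in bundle-switching constructions of this kind a $cs$-foliation may well exist. The obstruction is on the $cu$ side. Since $M \times \{0\}$ is itself tangent to $\Ecuf$, it would have to be a leaf of any putative $cu$-foliation. The paper's argument (\cref{sec:incoherent}, carried over to the flow case with only the replacement of $g$ by $\phi_1$) shows that any curve tangent to $\Ecuf$ just above $M \times \{0\}$ whose $M$-projection moves in the negative $\Esphi$ direction must \emph{fall} in the $S^1$ coordinate at a definite rate and therefore reach $M \times \{0\}$ after finite projected length; a foliation-chart argument then produces a nearby $cu$-curve that actually crosses $z=0$, which the reflection symmetry $z \mapsto -z$ forbids. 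You should replace your $\Ecs$ sketch with this $\Ecu$ argument.
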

Finally,
we establish a version of \cref{thm:oneswitch} which holds for
a higher-dimen\-sional stable bundle $\Esg,$
so long as the bundle splits into one-dimensional subbundles.

\begin{thm} \label{thm:multiswitch}
    Suppose that $g : M \to M$ is a partially hyperbolic diffeomorphism such that
    the stable bundle has an invariant dominated subsplitting
    \[
        \Esg = \Es_1 \oplus \Es_2 \oplus \cdots \oplus \Es_d
    \]
    into one-dimensional subbundles
    and that $g$ preserves the orientation of each subbundle $\Es_i.$
    Then 
    there is a partially hyperbolic diffeomorphism
    $f : M \ti \bbT^d \to M \ti \bbT^d$ such that
    \begin{enumerate}
        \item $M \ti \{0\}$ is a cu-submanifold,
        \item
        $f(x,0) = (g(x), 0)$ for all $x \in M,$ and
        \item
        $f$ is dynamically incoherent.
    \end{enumerate} \end{thm}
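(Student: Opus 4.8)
The plan is to iterate the single bundle-switch construction of \cref{thm:oneswitch} once for each one-dimensional subbundle $\Es_i$, switching them off one at a time. Concretely, write $\bbT^d = S^1 \ti S^1 \ti \cdots \ti S^1$ with coordinates $(\theta_1, \dots, \theta_d)$, each $S^1 = [-1,1]/{\sim}$ as in the rest of the paper. On $M \ti \bbT^d$ I would build $f$ so that, roughly, on the ``face'' where $\theta_1 = \cdots = \theta_i = 0$ the center bundle has absorbed the first $i$ circle directions while the remaining stable bundle is $\Es_{i+1}\oplus\cdots\oplus\Es_d$ together with the ``weakly expanded'' circle directions $\theta_{i+1},\dots,\theta_d$ sitting in center. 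The model map in the $k$-th circle factor is the same $h : S^1 \to S^1$ used before: a strong contraction at $0$, weak expansion at $1$. The shear that performs the $k$-th switch is along a smooth vector field on $M$ approximating the one-dimensional direction $\Es_k$, activated in the region of $\bbT^d$ where $\theta_k$ is between $0$ and $1$ but $\theta_1 = \cdots = \theta_{k-1} = 0$ is no longer required --- one must be careful to localize each shear so the switches commute in the relevant regions.

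The first step is to set up the precise dynamical cocycle: take $f_0 = g \ti h \ti \cdots \ti h$ and record, at each point of $M \ti \bbT^d$, the ordering of Lyapunov-type rates coming from $g$'s splitting $\Es_1,\dots,\Es_d,\Ec_g,\Eu_g$ interleaved with the $d$ circle rates (which vary from strongly contracting near $\theta_k = 0$ to weakly expanding near $\theta_k = 1$). The key point is that the domination $\Es_1 \succ \Es_2 \succ \cdots \succ \Es_d$ from the hypothesis lets us choose the circle map $h$ and a family of adapted metrics so that, in every region of $\bbT^d$, there is a clean dominated splitting of $T(M \ti \bbT^d)$ into (stable) $\oplus$ (center, two-dimensional in the fiber plus $\Ec_g$ worth) $\oplus$ (unstable $\Eu_g$). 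The second step is to define the shears: $f = \Phi_d \circ \cdots \circ \Phi_1 \circ f_0$ (or interleave them with $f_0$) where each $\Phi_k$ is supported in a neighborhood of the set $\{\theta_k \in (0,1)\}$ and shears in an approximate-$\Es_k$ direction by an amount growing with $\theta_k$, exactly as in the proof of \cref{thm:oneswitch} but now one switch per coordinate. Because each $\Es_k$ is one-dimensional and $g$ preserves its orientation, the approximating smooth vector field exists globally and the shear is well-defined on all of $M \ti \bbT^d$. The third step is the cone-field / domination estimate showing $f$ is partially hyperbolic: this reduces, via the domination between consecutive $\Es_i$, to $d$ essentially independent copies of the estimate already carried out for \cref{thm:oneswitch}, since the regions where different shears are active overlap only where the relevant extra circle coordinates are pinned at $0$ and hence contribute only strong contraction. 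The fourth step is incoherence: $M \ti \{0\}$ is tangent to $\Ecu$ by construction (all $d$ circle directions have been absorbed into stable there), it is a compact cu-submanifold, and an argument identical to the one in \cref{thm:oneswitch} --- following the $\Ecu$ plane field along a path in $\bbT^d$ from $0$ toward $(1,1,\dots,1)$ and seeing it forced to rotate --- shows no foliation tangent to $\Ecu$ can contain $M \ti \{0\}$.

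The main obstacle I expect is making the several shears genuinely compatible: with a single circle one only needs the alignment to rotate monotonically from ``fiber = stable'' to ``fiber = center'', but with $d$ circles the intermediate regions of $\bbT^d$ see partial switches in several coordinates at once, and one must verify that the composite shear still admits an invariant cone field with the correct three-way domination everywhere, not just on the coordinate faces. The cleanest way around this is to choose the supports of the $\Phi_k$ and the profile of each shear so that $\Phi_k$ is the identity unless $\theta_j$ is very close to $0$ for all $j < k$ (a ``staircase'' ordering of the switches), so that at any point of $M \ti \bbT^d$ at most one shear is in its transitional regime and the rest are either off or fully switched; then the domination hypothesis $\Es_1 \succ \cdots \succ \Es_d$ guarantees the remaining, already-switched stable directions stay dominated by the one currently switching. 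With that localization in place, each local estimate is literally the \cref{thm:oneswitch} estimate, and the global partial hyperbolicity and incoherence follow by the same arguments. A secondary, more routine, point is checking that the center bundle of $f$ is $(d+\dim\Ec_g)$-dimensional and that no lower-dimensional sub-foliation sneaks in; this is handled by the same integrability-obstruction computation along $\bbT^d$ as in the one-dimensional case.
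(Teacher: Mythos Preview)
Your overall plan—iterate the single switch $d$ times—is the paper's strategy, but your implementation has a genuine gap: building $f$ on $M \ti \bbT^d$ all at once, with $d$ shears along vector fields on $M$ and a staircase localization, does not produce a dominated splitting of the right dimensions. Look at the point $(x, 1, 0, \dots, 0)$. Your rule makes $\Phi_2, \dots, \Phi_d$ the identity there (since $\theta_1 = 1$ is far from $0$), so only the $\theta_1$-switch has acted, trading the first circle for the weakest stable subbundle $\Es_1$. The intended $\Esf$ would then be the circles $2, \dots, d$ (rate $\lam$) together with $\Es_1$, while the intended $\Ecf$ contains $\Es_2, \dots, \Es_d$. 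But $\Es_2, \dots, \Es_d$ are \emph{more} contracted than $\Es_1$, so a direction you have placed in the center dominates a direction you have placed in the stable—there is no dominated splitting with these subspaces. A single switch can trade only one $\Es_j$ for one circle; with the other $d-1$ switches turned off by the staircase, the remaining $\Es_j$ on $M$ have nowhere consistent to go. (Your description of the face $\theta_1 = \cdots = \theta_i = 0$ also has the roles reversed: those circle directions sit at the strong contraction of $h$ and belong to $\Esf$, not to the center.)

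The paper avoids this by making the iteration genuinely inductive rather than simultaneous. It first proves a single-switch result, \cref{prop:highswitch}, valid whenever $\Esg = \Essg \oplus_< \Ewsg$ with $\dim \Ewsg = 1$: the switch trades $\Ewsg$ for the new circle, keeping $\dim \Esf = \dim \Esg$. An addendum (Addendum~\ref{addendum:onesplit}) then shows that if $\Esg$ splits into one-dimensional oriented subbundles, so does $\Esf$, so the hypothesis persists. One then applies \cref{prop:highswitch} $d$ times to the successive outputs: $g_0 = g$ on $M$, $g_1$ on $M \ti S^1$, \dots, $g_d = f$ on $M \ti \bbT^d$; dynamical incoherence comes from the last application. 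The shear at step $k$ is along a vector field on $M \ti \bbT^{k-1}$ approximating the weakest stable subbundle of $g_{k-1}$—not a lift of something on $M$—and this is exactly the ingredient your simultaneous construction lacks.
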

In both theorem \ref{thm:oneswitch} and \ref{thm:multiswitch},
the stable bundle of $g$ becomes part of the center bundle of $f$
on the \emph{cu}-submanifold $M \ti \{0\},$ and so the dimensions of the
splitting are
\[
    \dim \Esf = \dim \Esg,
    \quad
    \dim \Ecf = \dim \Ecsg,
    \qandq
    \dim \Euf = \dim \Eug.
\]
The same holds for \cref{thm:anosovswitch} with $\phi$ in place of $g$

\medskip

In related work,
the papers \cite{BGHP,ham2018construct}
also give constructions of dynamically incoherent systems.
The work in \cite{BGHP} uses a technique called ``$h$-transversality''
to build examples on unit tangent bundles of surfaces
and these examples can be made
absolutely partially hyperbolic,
robustly transitive,
and stably ergodic.
The $h$-transversality method is quite different to
the bundle switching used here.

There are two different constructions in \cite{ham2018construct}
and both may be thought of as bundle switching.
The first is on the 3-torus and shows that the original construction
of \cite{RHRHU-nondyn} can be modified so that the center-unstable torus
has derived-from-Anosov dynamics instead of Anosov dynamics.

The second construction, in  \cite[\S 7]{ham2018construct},
produces high-dimensional examples on manifolds of the form
$M \ti \bbT^D.$
The base manifold $M$ can be anything, but the dynamics on the fibers $\bbT^D$
is quite restricted.
It is a product $A \ti \cdots \ti A$ of $D/2$ copies of
a linear Anosov map $A : \bbT^2 \to \bbT^2.$
This is the opposite case to \cref{thm:multiswitch},
where the base manifold is $\bbT^d,$ but the fiber dynamics
can be a partially hyperbolic map on a general manifold.
Moreover, \cref{thm:multiswitch} proves dynamical incoherence
whereas \cite[\S 7]{ham2018construct} only establishes the presence of
embedded compact submanifolds tangent to $\Ecs$ or $\Ecu.$
Both constructions in \cite{ham2018construct} rely on the fact that
the stable bundle in the fibers is linear and use linear shearings.

\medskip{}

There were several motivations for the work in the current paper.

One motivation was to assess the difficulty of classifying partially
hyperbolic systems in dimensions 4 and higher.
There are many recent classification results for
3-dimensional partially hyperbolic systems
in a variety of settings, though a complete classification has not yet been
achieved. Many of the results were first established in the dynamically
coherent setting \cite{HP2, 1908.06227}
and then extended to non-dynamically coherent systems
\cite{hp2019class, 2008.04871}.
For instance, the classification of partially hyperbolic systems
on the 3-torus is more complicated when
\emph{cs} and \emph{cu}-tori are present.
Fortunately, only a small family of 3-dimensional manifolds support
partially hyperbolic systems with \emph{cs} and \emph{cu}-tori.
The work in the current paper shows that in dimension 4 and higher,
partially hyperbolic systems with
compact \emph{cs} and \emph{cu}-sub\-manifolds
are not limited to a small family of manifolds, and so they will 
significantly complicate attempts at classifying these systems.

The constructions given in this paper add at least one
to the dimension of the center bundle.
For instance, if dim $\Ecg = 1$ in \cref{thm:oneswitch},
then dim $\Ecf = 2$ in the constructed system.
Therefore, there is some hope that classification of systems with
one-dimensional center may still be tractable in the higher-dimensional
setting.
The proofs of theorems \ref{thm:oneswitch} and \ref{thm:multiswitch}
can be adapted to the case where dim $\Ecg = 0;$
that is, where $g$ is an Anosov diffeomorphism.
This means, in particular, that we can construct a dynamically incoherent
partially hyperbolic diffeomorphism $f : M \ti S^1 \to M \ti S^1$
with one-dimensional center using an Anosov diffeomorphism $g : M \to M$
with one-dimensional stable direction.
Of course, such an Anosov diffeomorphism must be defined on a torus
$M = \bbT^d$ \cite{newhouse1970codimension}.
Perhaps it might be the case that such systems only exist on
a limited number of manifolds, analogous to the 3-dimensional results in
\cite{RHRHU-tori}.

\begin{question}
    Which manifolds in dimension 4 and higher
    support a partially hyperbolic system with
    one-dimensional center and with a compact cs or cu-submanifold?
\end{question}
An additional motivation is the goal of building transitive examples with some
form of 
bundle switching. For instance, could one build a transitive partially
hyperbolic skew product where some of the fibers are tangent to $\Es \oplus \Eu$
and other fibers are tangent to $\Ec?$

By applying \cref{thm:oneswitch} to a system
$g : \bbT^3 \to \bbT^3,$
with a \emph{cs}-torus,
we can produce a (non-transitive) partially hyperbolic diffeomorphism
$f$ defined on the 4-torus $\bbT^2 \oplus \bbT^2,$
regarded as a skew product with fibers of the form $\{x\} \ti \bbT^2.$
The two-dimensional center direction $\Ecf$
is tangent to a single fiber $\{x_0\} \ti \bbT^2,$
but transverse to the fibers everywhere except a
3-dimensional \emph{cu}-torus.
This example is not transitive, but if some form of transverse version
of this construction is possible, it would likely be similar to this example
in a neighbourhood of $\{x_0\} \ti \bbT^2.$

Thus far, all of the bundle switching constructions, both here and in
\cite{RHRHU-nondyn, ham2018construct}
are non-transitive, building either an attracting \emph{cu}-submanifold
or repelling \emph{cs}-submanifold.
Hopefully, generalizing these techniques as far as possible
will lead to transitive constructions.

A further motivation is to develop these techniques in the partially
hyperbolic setting as far as possible in order to see if they might also be
applied in the Anosov setting. It is a long standing open question in dynamics
whether every Anosov diffeomorphism is transitive.
If non-transitive examples of Anosov diffeomorphisms actually do exist,
it is conceivable that a (much more advanced) version of bundle switching
might be used to construct examples of such systems.

\medskip{}

The paper first gives a complete proof of \cref{thm:oneswitch}
and then later explains how the proof can be adapted to establish
\cref{thm:anosovswitch} and \cref{thm:multiswitch}.
This means regrettably that later sections list out changes to make to
``patch'' earlier sections. However, we felt it best to present the key ideas
of the construction in the simplest setting possible, before treating more
general cases.

\Cref{sec:construction} constructs the partially hyperbolic diffeomorphism $f$ in
\cref{thm:oneswitch} and shows it satisfies items (1) and (2) of that theorem.
\Cref{sec:incoherent} then establishes item (3), dynamical incoherence.
\Cref{sec:flows} explains how to adapt these proofs to show \cref{thm:anosovswitch}.
\Cref{sec:high} states and proves a result, \cref{prop:highswitch},
which holds for a higher-dimensional stable bundle.
Then, \cref{sec:multi} shows how this proposition can be used to prove
\cref{thm:multiswitch}.

\section{Construction} \label{sec:construction} 

To establish the existence of dominated splittings,
we use the following result given in \cite{ham2018construct}.

\begin{thm} \label{thm:genchaindom}
    Suppose $f$ is a diffeomorphism of a manifold $M$
    and $Y$ and $Z$ are compact invariant subsets such that
    \begin{enumerate}
        \item all chain recurrent points of $f|_Y$ lie in $Z,$
        \item
        $Z$ has a dominated splitting $T_Z M = \Eu \oplus \Es$
        with $d = \dim \Eu,$
        and
        \item
        for every $x \in Y \sans Z,$
        there is a point $y$ in the orbit of $x$
        and a subspace $V_y$ of $T_y M$ of dimension $d$ such that
        for any non-zero $v \in V_y,$ each of the sequences
        \[
            \left\{ \frac{Df^n v}{ \| Df^n v \| } \right\}_{n=0}^{\infty}
            \qandq
            \left\{ \frac{Df^{-n} v}{ \| Df^{-n} v \| } \right\}_{n=0}^{\infty}
        \]
        accumulates on a vector
        in $T_Z M \sans \Es$ as $n \to +\infty$.
    \end{enumerate}
    Then the dominated splitting on $Z$ extends to a dominated splitting
    on $Y \cup Z.$
\end{thm}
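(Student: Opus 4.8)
The plan is to recast dominated splittings in terms of adapted cone fields and to propagate the cone field of the splitting on $Z$ across $Y$. Recall that $T_Z M = \Eu \oplus \Es$ being dominated (say $m$-dominated, after rescaling the metric) is equivalent to the existence of a neighbourhood $U$ of $Z$ and continuous cone fields $\mathcal{C}^u$ of index $d$ and $\mathcal{C}^s$ of index $\dim M - d$ on $U$, with $\mathcal{C}^u_x \cap \mathcal{C}^s_x = \{0\}$, such that along every orbit segment contained in $U$ the cone $\mathcal{C}^u$ is strictly forward invariant under $Df^m$, the cone $\mathcal{C}^s$ is strictly backward invariant under $Df^m$, and $\|Df^m v^s\| \le \tfrac12 \|Df^m v^u\|$ for all unit $v^u \in \mathcal{C}^u_x$, $v^s \in \mathcal{C}^s_x$. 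It therefore suffices to produce cone fields with these properties on all of $Y \cup Z$. Hypothesis (1) enters through Conley theory: since $Y$ is compact and invariant with chain recurrent set contained in $Z$, the $\omega$-limit and $\alpha$-limit set of every $x \in Y$ lie in $Z$. Hence for each $x \in Y \setminus Z$ there is an integer $N(x)$ with $f^n(x) \in U$ whenever $|n| \ge N(x)$, so only the finite window $|n| < N(x)$ of the orbit can leave $U$. On the two tails the cone fields already do everything we need, and the whole difficulty is to bridge these finite windows.

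Using the backward tail I would first define a candidate extension of the unstable bundle: for $x \in Y \setminus Z$ set $\Eu_x := \bigcap_{k \ge N^-(x)} Df^k\bigl(\mathcal{C}^u_{f^{-k}(x)}\bigr)$, where $N^-(x)$ is large enough that the backward orbit past it stays in $U$. Domination along that backward tail contracts this nested family of cones down to a subspace of dimension exactly $d$, and one checks in the standard way that $x \mapsto \Eu_x$ is continuous, $Df$-invariant, independent of the choice of $N^-(x)$, and agrees with the original $\Eu$ over $Z$; the complementary construction, using forward tails and $\mathcal{C}^s$, produces $\Es$ of dimension $\dim M - d$. What remains — and this is the crux — is to show that $\Eu \oplus \Es$ is not merely a continuous invariant splitting of $T_{Y\cup Z}M$ but a \emph{dominated} one, and this is where hypothesis (3) is used. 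Given $x \in Y \setminus Z$, let $y$ and $V_y$ be as in (3). Since $f^{\pm n}(y) \to Z$, where $\mathcal{C}^u$ is adapted, and every unit $v \in V_y$ has both its forward and its backward normalized orbit accumulating only on $T_Z M \setminus \Es$, the backward orbit of the $d$-dimensional subspace $V_y$ eventually enters, and then remains in, $\mathcal{C}^u$; because $\mathcal{C}^u$ is contracted along that tail and $\Eu_{f^{-k}(y)}$ is the unique $d$-dimensional subspace whose backward iterates stay inside $\mathcal{C}^u$, this forces $V_y = \Eu_y$. Thus the extended bundle $\Eu$ lies inside a slightly widened cone field $\widetilde{\mathcal{C}}^u$ at every point of $Y \cup Z$, including within the transient windows, and symmetrically $\Es$ lies inside a disjoint widened cone field $\widetilde{\mathcal{C}}^s$.

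Taking $\widetilde{\mathcal{C}}^u$ and $\widetilde{\mathcal{C}}^s$ as the cone fields on $Y \cup Z$, strict invariance under $Df^{m'}$ for a suitable $m' \ge m$ and the ratio estimate then follow from those on $Z$ together with the uniform bound on $\|Df^{\pm 1}\|$ over the compact set $Y \cup Z$, which controls the effect of the finitely many transient iterates; by the cone-field characterization this gives the dominated splitting on $Y \cup Z$ extending the one on $Z$. The main obstacle is precisely this last passage, from an invariant continuous splitting to a dominated one across the non-recurrent set $Y \setminus Z$: a priori the two extended bundles could approach one another, or their forward growth rates could fail to separate uniformly, along the transient windows, and hypothesis (3) is exactly the device that rules this out by pinning $\Eu$ inside the unstable cone, and dually $\Es$ inside the stable cone, at every point.
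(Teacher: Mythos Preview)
The paper does not prove this theorem. Immediately before the statement it says that the result is ``given in \cite{ham2018construct}'' and then simply uses it as a black box throughout \S\ref{sec:construction} and later sections. There is therefore no proof in this paper to compare your proposal against.

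That said, a brief comment on your sketch. The overall architecture --- cone-field characterisation of domination, Conley theory to get tails of every orbit into a neighbourhood of $Z$, build $\Eu$ by contracting the forward-iterated unstable cones along the backward tail, and then identify $V_y$ with $\Eu_y$ --- is the natural one. The step you flag as the crux is indeed where the argument is thinnest. You assert that ``the backward orbit of the $d$-dimensional subspace $V_y$ eventually enters, and then remains in, $\mathcal{C}^u$'', but the hypothesis only gives, for each non-zero $v \in V_y$, \emph{some} backward accumulation point in $T_Z M \setminus \Es$; it does not say every accumulation point lies there, and a vector outside $\Es$ need not lie in $\mathcal{C}^u$. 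Moreover, for a dominated splitting on $Z$ the backward normalized iterates of a generic vector converge to $\Es$, not away from it, so having a backward accumulation point off $\Es$ is a strong condition whose consequences along an orbit merely approaching $Z$ need to be drawn out carefully. One clean way to close the gap is to argue by contradiction: if $V_y \ne \Eu_y$, pick $v \in V_y \setminus \Eu_y$, use the already-constructed $\Eu$ along the backward tail together with domination in $U$ to show that the $\Eu$-component of $Df^{-n}v$ is eventually dominated by the transverse component, forcing every backward accumulation point into $\Es$ and contradicting hypothesis~(3). Without some such argument the passage from ``accumulates off $\Es$'' to ``sits in the unstable cone'' is a genuine gap.
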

As noted in \cite{ham2018construct}, the subspace $V_y$ will be equal to $\Eu(y)$
in the extended dominated splitting.
Even using this theorem,
we will still deal extensively with cone families and other subsets
of the tangent bundle. Because of this, we introduce notation tailored for
working with these subsets.
\begin{notation}
    Suppose $\Aone$ and $\Bone$ are subsets of $TM.$
    \begin{enumerate}
        \item Let $\Aone(x)$ denote $\Aone \cap T_x M.$
        \item
        Write $\Aone \cc \Bone$ to signify that for each $x \in M,$ any non-zero
        vector $v$ in the closure of $\Aone(x)$ is
        contained in the interior of $\Bone(x).$
        \item
        Define the pointwise sum
        \begin{math}
            \displaystyle 
            \Aone + \Bone = \bigcup_{x \in M} \big(\Aone(x) + \Bone(x) \big).
        \end{math}

        \noindent
        That is, $u \in \Aone + \Bone$ if and only if there
        are $x \in M, v \in \Aone(x),$ and $w \in \Bone(x)$ such that $u = v + w.$
        \item
        Define $-\Aone$ by
        $v \in -\Aone$ if and only if $-v \in \Aone.$
        \item
        Define $\Aone - \Bone$ as $\Aone + (-\Bone).$
    \end{enumerate} \end{notation}
As in \cite{cropot2015lecture},
a \emph{cone family} is a subset of $TM$ defined by
\[
    \{ v \in TM : Q(v) \ge 0 \}
\]
where $Q : TM \to \bbR$ is a continuous function which restricts to a quadratic
form $Q|_{T_x M}$ on each tangent space. We assume $M$ is connected and so
the quadratic forms all have the same signature independent of $x,$
and this defines the \emph{dimension} of the cone family.

To construct an example on $M \ti S^1,$
we first define a diffeomorphism $f$ from $M \ti [0,1]$ to itself.
Then, we extend $f$ to $M \ti [-1,1]$ by the symmetry
\[
    f(x, -z) = (x_1, -z_1)
    \quad \text{where} \quad
    (x_1, z_1) = f(x, z).
\]
Finally, we identify the endpoints of the interval $[-1,1]$
to produce a diffeomorphism of
\begin{math}
    M \ti ([-1,1] / \sim ) = M \ti S^1.
\end{math}

Consider $g : M \to M$ partially hyperbolic with
one-dimensional stable direction $\Esg.$
As in \cref{thm:oneswitch}, assume that $g$ preserves an orientation of $\Esg.$
Fix constants $0 < \lam < \eta < 1 < \mu$ such that
\[
    \lam < \| Dg v^s \| < \eta \qandq \mu < \| Dg v^u \|
\]
for all unit vectors $v^s \in \Esg$ and $v^u \in \Eug.$
(Later in \cref{sec:flows}, the constant $\lam$ will be chosen
differently.)
Note that we do not assume that $g$ is absolutely partially hyperbolic,
so it may be the case that there are unit vectors $v^c \in \Ecg$
such that $ \| Dg v^c \| $ lies outside of the interval $[\eta, \mu].$

\begin{figure}
    \centering
    \includegraphics{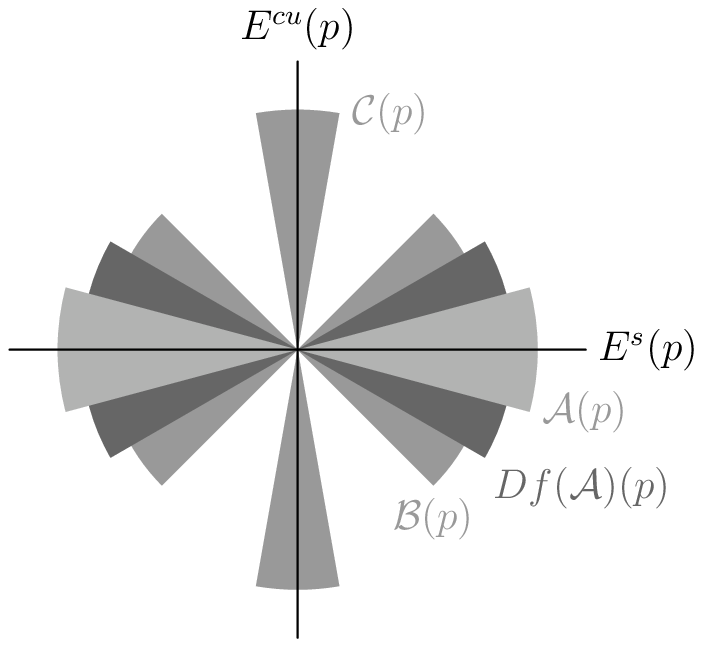}
    \caption{
    A depiction of the cone families, $\Aone, \Bone,$ and $\Cone$
    at a point $p.$ For simplicity, we draw $\Ecu$ as if it were one-dimen\-sional.}
    \label{fig:cones}
\end{figure}
For the partially hyperbolic splitting
\[
    TM = \Esg \oplus \Ecg \oplus \Eug,
\]
let $\Aone$ be a cone family associated to $\Esg.$
That is, $\Esg \cc \Aone \cc Dg(\Aone)$ and the dual cone family
$\Aone^* = \overline{TM \sans \Aone}$
satisfies $\Ecug \cc \Aone^*.$
Use 0 to denote the zero section of the tangent bundle $TM.$
By assumption $\Esg$ is one-dimensional and oriented, and so
$\Esg \sans 0$ has two connected components
$E^+$ and $E^-.$
Here, $E^+$ is the component consisting of all vectors pointing
in the positive direction of $\Esg.$
Further,
$\Aone \sans 0$ has two connected components
$\Aone^+$ and $\Aone^-$
where $E^+$ is a subset of $\Aone^+.$
Let $\Bone = Dg^2(\Aone)$ and
define $\Bone^+$ and $\Bone^-$ analogously to $\Aone^+$ and $\Aone^-.$
Since by assumption $Dg$ preserves the orientation of $\Esg,$
\[
    \Aone^+ \cc Dg(\Aone^+) \cc \Bone^+ \cc Dg(\Bone^+).
\]
Using the dual cone, define
$\Cone = Dg(\Bone^*)$
so that
$\Ecug \cc Dg(\Cone) \cc \Cone.$
Note that $\Bone \cap \Cone$ is the zero section of $TM$
and $\Bone^+ \cap \Cone$ is the empty set.
See \cref{fig:cones}.
Finally, define an unstable cone family $\Uone$ associated
to the dominated splitting $\Ecsg \oplus \Eug.$
That is, $\Eug \cc Df(\Uone) \cc \Uone$ and $\Ecsg \cc \Uone^*.$

\begin{figure}

\centering
\includegraphics{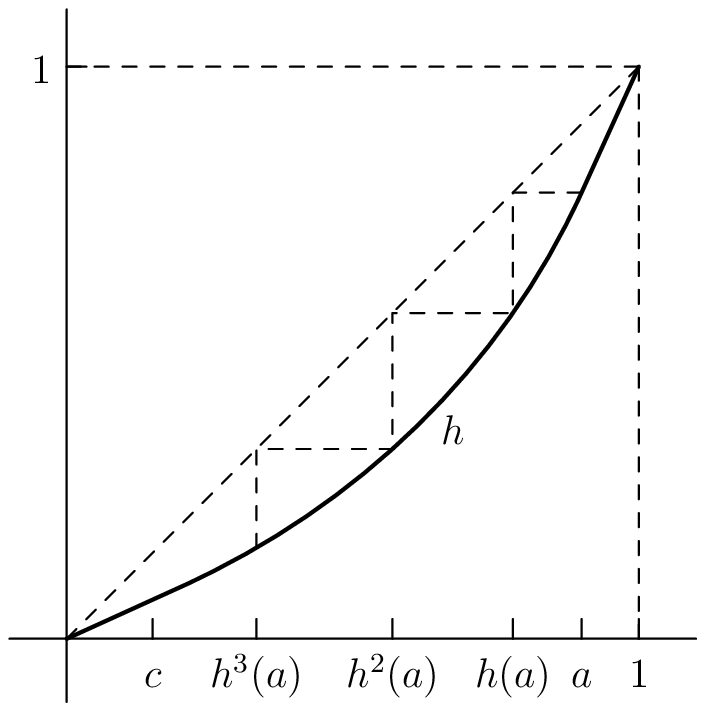}
\includegraphics{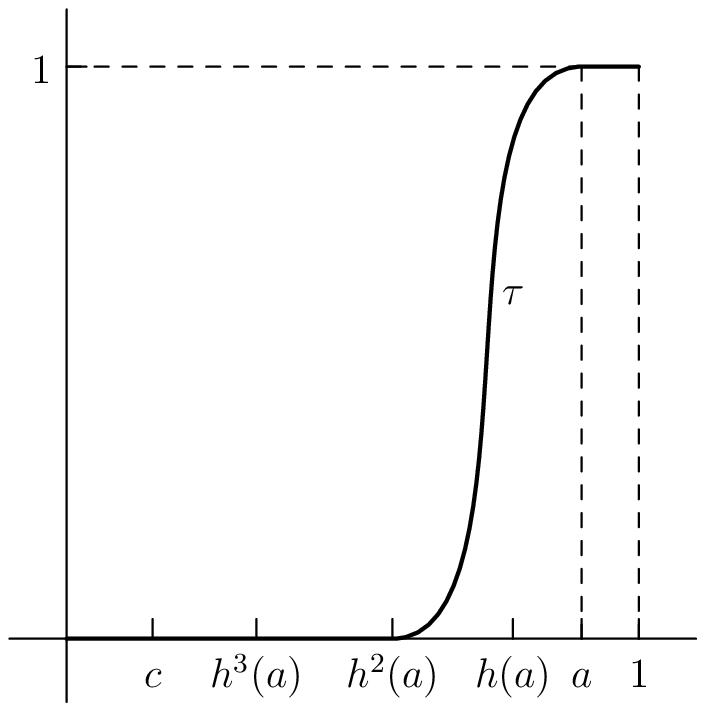}
\caption{The graphs of the functions $h$ and $\tau$.}
\label{fig:htau}
\end{figure}
We now define functions $h$ and $\tau$ as depicted in figure \ref{fig:htau}.
Fix constants $0 < c < a < 1$ and
define a smooth function $h : [0,1] \to [0,1]$ such that
the following properties hold:
\begin{enumerate}
    \item $h(z) < z$ for all $0 < z < 1;$
    \item
    $h(z) = 1 - \mu \cdot (1 - z)$ for all $a < z < 1;$ and
    \item
    $h(z) = \lam \cdot z$ for all $0 < z < c.$
\end{enumerate}
Note that $h(0) = 0$ and $h(1) = 1.$ We assume that $c < h^3(a).$
Define a smooth function $\tau : [0,1] \to [0,1]$ with
the following properties:
\begin{enumerate}
    \item $\tau(z) = 0$ for all $z \le h^2(a);$
    \item
    $\tau'(z) > 0$ for all $h^2(a) < z < a;$ and
    \item
    $\tau(z) = 1$ for all $z \ge a.$
\end{enumerate}
In general, the subbundle $\Esg$ will not be $C^1,$
but it can be smoothly approximated.
Therefore, choose a smooth vector field
$X$ on $M$ such that $X \subof \Aone^+.$
This defines a smooth flow $\sig_t$ on $M.$
Write $g_t : M \to M$ for the composition $g_t = \sig_t \circ g.$
If $t$ is sufficiently small, then $\sig_t$ is $C^1$ close to the identity
map on $M$ and so the inclusions
\[
    \Aone \cc Dg_t(\Aone) \cc \Bone \cc Dg_t(\Bone),
    \quad
    Dg_t(\Cone) \cc \Cone,
    \qandq
    Dg_t(\Uone) \cc \Uone
\]
all hold.
By rescaling the vector field $X,$ we can assume
without loss of generality that these inclusions hold
for all $g_t$ with $|t| \le 1.$
It follows that $g_1$ has a partially hyperbolic splitting
$TM = E^s_{g_1} \oplus E^c_{g_1} \oplus E^u_{g_1}$ with
\[
    E^s_{g_1} \cc \Aone,
    \quad
    E^{c u}_{g_1} \cc \Cone,
    \qandq
    E^u_{g_1} \cc \Uone.
\]
    
Let $I$ denote the interval $[0,1]$
and define the diffeomorphism
\[
    f : M \ti I \to M \ti I,
    \quad
    (x, z) \mapsto ( g_{\tau(z)}(x), h(z) ).
\]
We will show that this is partially hyperbolic
with
\[
    \dim \Esf = \dim \Esg,
    \quad
    \dim \Ecf = \dim \Ecg + 1,
    \qandq
    \dim \Euf = \dim \Eug.
\]
In order to analyze the derivative of $f,$
for each $z \in [0,1]$ we identify its tangent
space $T_z I$ with $\bbR$ in the obvious way.
Then we can write a vector in
$T(M \ti I) = TM \ti TI$
in the form $(u, v)$ where
$u \in TM$ is the ``horizontal'' part of the vector and
$v \in \bbR$ is the ``vertical'' part of the vector.
If this tangent vector is based at a point
$(x, z) \in M \ti I,$ then the derivative of $f$ satisfies
\[
    Df(u, v) \ =\ 
    \big( Dg_{\tau(z)} (u)\ +\ v \cdot \tau'(z) \cdot X(y),\ \
    v \cdot h'(z) \big)
\]
where
$Dg_t : TM \to TM$ is the derivative of $g_t : M \to M$ for a fixed $t,$ 
$h'$ is the usual single-variable calculus
notion of a derivative of the map $h : I \to \bbR,$
and $y = g_{\tau(z)}(x).$
On the set $M \ti \{0\}, f$ has a partially hyperbolic splitting
\[
    \Esf(x,0) = 0 \ti \bbR,
    \quad
    \Ecf(x,0) = \Ecsg(x) \ti 0,
    \qandq
    \Euf(x,0) = \Eug(x) \ti 0.
\]
On the set $M \ti \{1\}, f$ has a partially hyperbolic splitting
\[
    \Esf(x,1) = E^s_{g_1}(x) \ti 0,
    \quad
    \Ecf(x,1) = E^c_{g_1}(x) \ti \bbR,
    \qandq
    \Euf(x,1) = E^u_{g_1}(x), \ti 0.
\]
We use \cref{thm:genchaindom} twice
to extend this splitting to all of $M \ti [0,1].$

\begin{notation}
    For the remainder of the paper, we adopt the following notation.
    If $p = (x, z)$ is a point in $M \ti I$ and
    $(u, v) \in T_x M \ti \bbR$ is a tangent vector based at $p,$
    then for all $n \in \bbZ:$
    \[
        (x_n, z_n) = f^n(x, z),
        \quad
        t_n = \tau(z_n),
        \quad
        (u_n, v_n) = Df^n(u, v),
        \qandq
        w^n = \frac{(u_n, v_n)}{ \| (u_n, v_n) \| }.
    \]
    In most places where we use this notation,
    $p$ will be a point in the fundamental domain
    $M \ti (h(a),a].$
\end{notation}
\begin{lemma} \label{lemma:extendcu}
    The dominated splitting $\Esf \oplus_< \Ecuf$ on $M \ti \{0,1\}$
    extends to all of $M \ti [0,1].$
\end{lemma}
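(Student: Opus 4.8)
\textit{Proof plan.} The plan is to invoke \cref{thm:genchaindom} with $Y = M \ti I$ and $Z = M \ti \{0,1\}$, the two bundles of its dominated splitting over $Z$ being $\Ecuf$ (the dominating one, of dimension $d = \dim \Ecuf = \dim M$) and $\Esf$ (the dominated one); both $Y$ and $Z$ are compact and $f$-invariant. Hypothesis~(2) is immediate: the partially hyperbolic splittings written above over $M \ti \{0\}$ and over $M \ti \{1\}$ restrict on $Z$ to $\Esf \oplus_< \Ecuf$. Note that over $Z$ the sum $\Esf \oplus \Ecuf$ is all of $T(M \ti I)$, so ``$T_Z M \sans \Esf$'' means $T(M \ti I) \sans \Esf$ along $Z$. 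For hypothesis~(1), the coordinate $z \colon M \ti I \to [0,1]$ satisfies $z \circ f = h \circ z \le z$, with equality exactly on $M \ti \{0,1\}$ because $h(s) < s$ for $0 < s < 1$; hence $z$ is a Lyapunov function with neutral set $M \ti \{0,1\}$, and every chain recurrent point of $f|_Y$ lies in $Z$.

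For hypothesis~(3), fix $p \in M \ti (0,1)$. The $z$-coordinate of $f^n(p)$ is $h^n$ applied to that of $p$ and tends to $1$ as $n \to -\infty$, so choose $n_0 < 0$ with $y := f^{n_0}(p) = (\hat x, \hat z)$ satisfying $\hat z > a$; this $y$ lies in the orbit of $p$. Put
\[
    V_y := E^{cu}_{g_1}(\hat x) \ti \bbR \subseteq T_{\hat x} M \ti \bbR ,
\]
a subspace of dimension $\dim E^{cu}_{g_1} + 1 = \dim M = d$. Going backward, the $z$-coordinate of $f^{-n}(y)$ is $h^{-n}(\hat z) \ge \hat z > a$, so along this part of the orbit $\tau \equiv 1$, $\tau' \equiv 0$ and $h' \equiv \mu$, making $Df\inv$ the block map $(u,v) \mapsto (Dg_1\inv u, v/\mu)$; since $E^{cu}_{g_1}$ is $Dg_1$-invariant, $Df^{-n}(V_y) = E^{cu}_{g_1}(x_{-n}) \ti \bbR$ for all $n \ge 0$, while the base points accumulate on $M \ti \{1\}$. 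As $E^{cu}_{g_1} \cap E^s_{g_1} = 0$, every nonzero vector of $E^{cu}_{g_1} \ti \bbR$ avoids $\Esf|_{M \ti \{1\}} = E^s_{g_1} \ti 0$, so the normalized backward iterates of each nonzero $v \in V_y$ accumulate on vectors of $T_Z M \sans \Esf$.

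Now the forward direction. Let $N_0$ be the first index with $z_{N_0} < c$; then $z_n < c < h^2(a)$ for all $n \ge N_0$, so $\tau(z_n) = 0$, $g_{\tau(z_n)} = g$, $h'(z_n) = \lam$, and there $Df$ is $(u,v) \mapsto (Dg\, u, \lam v)$. Partial hyperbolicity of $g$ together with $\lam < \| Dg\, v^s \|$ for unit $v^s \in \Esg$ gives $\|Dg^k u'\| / \lam^k \to \infty$ for every nonzero $u' \in TM$; hence each nonzero $(u',v') \in Df^{N_0}(V_y)$ with $u' \ne 0$ has normalized forward iterates tending to purely horizontal nonzero vectors, which, accumulating along $M \ti \{0\}$ where $\Esf = 0 \ti \bbR$, lie in $T_Z M \sans \Esf$. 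So it suffices to show $Df^{N_0}(V_y)$ contains no nonzero purely vertical vector, i.e.\ that $w := \bigl(Df^{N_0}\bigr)\inv(0,1)$, viewed as a vector at $y$, is not in $V_y$. Trace $w$ backward from $f^{N_0}(y)$: by the formula for $Df\inv$, $w$ stays purely vertical while the relevant $\tau'$ vanishes, and its vertical part stays positive (each backward step divides it by $h' > 0$). Going backward, the orbit re-enters the shearing region $\{h^2(a) < z < a\}$ --- this happens because when $z_n$ first drops below $a$ it lands in $[h(a),a) \subseteq (h^2(a),a)$, using $h(a) > h^2(a)$ --- and at that step $w$ acquires a horizontal component; since $X \subset \Aone^+$, $\tau' > 0$, and the vertical part is positive, that component lies in $-\Aone^+ = \Aone^- \subseteq \Bone^-$ and is nonzero. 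In every later backward step this horizontal component equals $Dg_t\inv(\xi - cX(\,\cdot\,))$ with $\xi \in \Bone^-$, $c \ge 0$ (again by positivity of the vertical part) and $X(\,\cdot\,) \in \Aone^+ \subseteq \Bone^+$; since each connected component of $\Bone \sans 0$ is a convex cone, $\xi - cX(\,\cdot\,) \in \Bone^-$, and since $Dg_t\inv(\Bone^-) \cc \Bone^-$ for $|t| \le 1$ (the oriented inclusions arranged in the construction), the horizontal component remains in $\Bone^-$ and nonzero all the way back to $y$. Thus $w$ has a nonzero horizontal component in $\Bone(\hat x)$; as $\Bone \cap \Cone$ is the zero section and $E^{cu}_{g_1} \cc \Cone$, this component is not in $E^{cu}_{g_1}(\hat x)$, so $w \notin V_y$.

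Having verified hypotheses~(1)--(3), \cref{thm:genchaindom} extends $\Esf \oplus_< \Ecuf$ from $M \ti \{0,1\}$ to all of $M \ti [0,1]$. I expect the forward analysis to be the main obstacle: one must control the direction into which the fibre (vertical) line is pulled back as an orbit crosses the shearing region, and this is precisely where the features built into the construction enter --- the nested oriented cones $\Aone^\pm \cc Dg_t(\Aone^\pm) \cc \Bone^\pm$, the disjointness $\Bone \cap \Cone = 0$ with $E^{cu}_{g_1} \cc \Cone$, and the choice $X \subset \Aone^+$ with $g$ preserving the orientation of $\Esg$ --- together with the sign bookkeeping that keeps the vertical component positive so that the horizontal component never leaves the convex cone $\Bone^-$.
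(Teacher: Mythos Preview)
Your proof is correct and follows the same overall strategy as the paper: apply \cref{thm:genchaindom} with $Y=M\times I$ and $Z=M\times\{0,1\}$, take the candidate $d$-plane to be $E^{cu}_{g_1}\times\bbR$, and use the product form of $Df$ on $\{z>a\}$ and on $\{z\le h^2(a)\}$ to handle the backward and eventual forward behaviour. The only real difference is in the forward step. The paper works from a point in the fundamental domain $M\times(h(a),a]$, pushes an arbitrary $(u,v)\in W$ forward two steps, and shows $u_2\neq 0$ by writing it as a sum of a vector in $\Cone$ and one in $\Bone^+$ (these meet only at zero). You instead pull the vertical vector $(0,1)$ backward through the shearing region and show its horizontal part lands in $\Bone^-$, hence outside $E^{cu}_{g_1}\subset\Cone$. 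These are dual versions of the same cone computation; yours avoids the paper's case-split on $v=0$ and $z=a$, at the cost of using the oriented backward inclusion $Dg_t^{-1}(\Bone^-)\subset\Bone^-$, which follows from the construction but is stated less explicitly there than the forward inclusion $Dg_t(\Aone^+)\subset\Bone^+$ the paper uses.
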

\begin{proof}
    For a point $p = (x,z) \in M \ti (h(a),a],$ define
    a subspace $W$ of the tangent space by
    \[
        W = \Ecu_{g_1}(x) \ti \bbR \subof T_x M \ti \bbR.
    \]
    Let $(u, v)$ be a non-zero vector in $W.$
    First consider the behaviour as $n \to -\infty.$
    Since $\tau(z_n) = 1$ and $\tau'(z_n) = 0$ for all $n < 0,$
    it follows that $u_n \in \Ecu_{g_1}$ for all $n < 0.$
    If a subsequence $\{n_j\}$ is such that $\lim_{j \to \infty} n_j = -\infty$
    and $\{w^{n_j}\}$ converges to tangent vector based at a point on
    $M \ti \{1\},$
    then it must be that $\lim w^{n_j} \in \Ecuf.$

    Now consider the behaviour as $n \to +\infty.$
    Replacing $(u, v)$ by $(-u, -v)$ if necessary,
    we can freely assume that $v \ge 0.$
    We first consider the case where both $v > 0$ and $z < a$ hold
    and show that $u_2$ is non-zero.
    Then we show the same result for the special cases of
    $v = 0$ and $z = a.$
    From the formula for the derivative $Df,$
    \[
        (u_1, v_1)
        \ =\
        \big(Dg_{t_0}(u_0)\ + \ v_0 \cdot \tau'(z_0) \cdot X(x_1),\ \
        v_0 \cdot h'(z_0) \big).
    \]
    Write $u_1 = u_C + u_A$ where
    \[
        u_C = Dg_{t_0}(u_0) \in \Cone
        \qandq
        u_A = v_0 \cdot \tau'(z_0) \cdot X(x_1) \in \Aone^+.
    \]
    Then
    \[
        (u_2, v_2)
        \ =\
        \big(Dg_{t_1}(u_C)\ +\ Dg_{t_1}(u_A)\ +\
        v_1 \cdot \tau'(z_1) \cdot X(x_2),\ \ v_1 \cdot h'(z_1) \big).
    \]
    Note that
    \[
        Dg_{t_1}(u_C) \in \Cone,
        \quad
        Dg_{t_1}(u_A) \in \Bone^+,
        \qandq
        v_1 \cdot \tau'(z_1) \cdot X(x_2) \in \Aone^+.
    \]
    The sum of a vector in $\Bone^+$ with a vector in $\Aone^+$
    is a vector in $\Bone^+.$
    In particular, it is non-zero and does not lie in $\Cone.$
    Therefore $u_2$ is non-zero as it is the sum of three vectors in
    $\Cone, \Bone^+,$ and $\Aone^+$ respectively.

    In the special case where $z = a,$
    $\tau'(a) = 0$ implies that $u_A = 0$ and therefore
    $u_2 \in \Cone + \Aone^+$ is non-zero.
    In the special case where $v = 0,$
    $u_0$ is non-zero and $g_{t_1} \circ g_{t_0}$ is a diffeomorphism, so
    $u_2 = D(g_{t_1} \circ g_{t_0})(u_0)$ is non-zero.

    Since $z_2 = h^2(z) \le h^2(a),$ it follows that
    $\tau(z_n) = 0$ and $\tau'(z_n) = 0$ for all $n \ge 2.$
    Hence $u_n = Dg^{n-2}(u_2)$ for all $n \ge 2.$
    Since the set of unit vectors in $\Esg$ is compact,
    there is $\delta > 0$ such that
    $ \| Dg|_{\Esg(x)} \| \ge \lam + \delta$
    holds for all $x \in M.$ Therefore
    \[
        \| u_{n+1} \| \ge (\lam + \delta) \| u_n \|
        \qandq
        |v_{n+1}| \le \lam |v_n|
    \]
    hold for all large positive $n.$
    We are considering a subsequence where $\lim n_j = +\infty$
    and $\{w^{n_j}\}$ converges to a tangent vector based at a point
    in $M \ti \{0\}.$ The above inequalities imply that
    $\lim w^{n_j} \in TM \ti 0.$
    The hypotheses of \cref{thm:genchaindom} are verified and
    the dominated splitting
    $\Esf \oplus \Ecuf$ on $M \ti \{0,1\}$
    extends to a dominated splitting on $M \ti [0,1].$
\end{proof}

\begin{notation}
    For later proofs, we introduce the notation
    \[
        u_B = Dg_{t_1}(u_A) + v_1 \cdot \tau'(z_1) \cdot X(x_1),
    \]
    so that in the setting of \cref{lemma:extendcu},
    $u_2 = Dg_{t_1}(u_C) + u_B$
    with $Dg_{t_1}(u_C) \in \Cone$ and $u_B \in \Bone^+.$
\end{notation}
%
%
%
%
%
%
%
%

\begin{lemma} \label{lemma:extendu}
    The dominated splitting $\Ecsf \oplus \Euf$ on $M \ti \{0,1\}$
    extends to a dominated splitting on $M \ti [0,1].$
\end{lemma}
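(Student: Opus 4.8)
The plan is to apply \cref{thm:genchaindom} a second time, now with the roles of stable and unstable reversed: set $Y = M \ti [0,1]$, $Z = M \ti \{0,1\}$, take $\Eu$ in the statement of \cref{thm:genchaindom} to be $\Euf|_Z$ and $\Es$ to be $\Ecsf|_Z$, and reverse time by working with $f\inv$ where convenient. Concretely, for a point $p = (x,z)$ in the fundamental domain $M \ti (h(a), a]$, I would produce a candidate $d$-dimensional subspace $V_p$ (with $d = \dim \Eug$) whose forward and backward normalized iterates accumulate only in $\Euf|_Z$. The natural guess is $V_p = \Eu_{g_1}(x) \ti 0$: since $\Eu_{g_1} \cc \Uone$, this subspace lies in the interior of the unstable cone, and the point is to track how it behaves under $Df$ as $z$ passes through the shearing region.

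The key steps, in order: (1) \emph{Backward iterates.} For $n < 0$ we have $z_n \ge a$, so $\tau(z_n) = 1$ and $\tau'(z_n) = 0$; hence $Df\inv$ acts on a vector $(u,0) \in \Eu_{g_1}(x) \ti 0$ simply as $(Dg_1\inv u, 0)$, keeping it in $\Eu_{g_1} \ti 0 \subof \Euf|_{M \ti \{1\}}$. So any subsequential limit based at a point of $M \ti \{1\}$ lies in $\Euf$. (2) \emph{Forward iterates: horizontal domination.} For $n \ge 0$, the vertical component starts at $v_0 = 0$ and stays $0$, because the formula for $Df$ sends $(u,0) \mapsto (Dg_{\tau(z)} u, 0)$ — the shear term $v \cdot \tau'(z) \cdot X(y)$ vanishes when $v = 0$. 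Thus the forward orbit of $V_p$ stays in $TM \ti 0$, where $Df$ restricts to $Dg_{\tau(z_n)}$; since each $Dg_t$ with $|t|\le 1$ preserves the unstable cone $\Uone$ and the uniform expansion/contraction estimates for $g_1$ (and the family $g_t$) give $\Uone$-vectors growing faster than $\Uone^*$-vectors, the normalized forward iterates of $V_p$ accumulate only on vectors in $\Eu_{g_t}$-type directions inside $\Uone$, hence in $\Euf|_{M\ti\{0\}}$ (note $\Euf(x,0) = \Eug(x)\ti 0$ and $\Uone \supof \Eug$). More carefully, I would argue that for large $n \ge 2$ one has $z_n \le h^2(a)$, so $\tau(z_n) = 0$, $u_n = Dg^{n-2}(u_2)$, and uniform expansion of $Dg$ on $\Uone$-vectors versus the behaviour on $\Uone^* \supof \Ecsg$ forces the limit into $\Eug \ti 0 = \Euf(x,0)$. (3) Conclude via \cref{thm:genchaindom} that $\Ecsf \oplus \Euf$ extends to all of $M \ti [0,1]$.

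The main obstacle I anticipate is verifying hypothesis (3) of \cref{thm:genchaindom} cleanly for the \emph{forward} direction — i.e. showing the normalized forward iterates of $V_p$ genuinely accumulate in $T_Z M \sans \Es = (TM \ti 0) \sans \Ecsf$ and not merely somewhere in the unstable cone that might degenerate. Because $g$ is only pointwise (not absolutely) partially hyperbolic, $\|Dg v^c\|$ can exceed $\mu$ for some center vectors, so domination between $\Euf$ and $\Ecf$ at points of $M \ti \{0\}$ is the subtle estimate; however, on the sets where $\tau \equiv 0$ the dynamics is exactly $g$ (times the identity on the fiber, which contracts), and the original domination $\Ecsg \oplus \Eug$ of $g$, together with $h'(z_n) < 1$ controlling the vertical part, should carry everything through. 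A secondary technical point is handling subsequential limits based at points with $0 < z < 1$: one must check these cannot occur as genuine accumulation points of $\{w^{n}\}$ for $n \to \pm\infty$ because $z_n \to 0$ forward and $z_n \to 1$ backward (as $h(z) < z$ on $(0,1)$), so all relevant limits are based on $M \ti \{0,1\}$ as required.
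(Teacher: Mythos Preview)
Your approach is correct and essentially identical to the paper's: same subspace $V_p = E^u_{g_1}(x) \ti 0$, same backward argument using $\tau(z_n)=1$, $\tau'(z_n)=0$ for $n<0$, and same forward observation that $v_n \equiv 0$ keeps the orbit inside $\Uone \ti 0$ via the invariance $Dg_t\Uone \cc \Uone$.

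The ``main obstacle'' you anticipate, however, is illusory. Hypothesis (3) of \cref{thm:genchaindom} only requires the accumulation vectors to lie in $T_Z M \sans \Es$, not in $\Eu$; here $\Es = \Ecsf|_{M\ti\{0\}} = \Ecsg \ti \bbR$, and since $\Ecsg \cc \Uone^*$ by the definition of $\Uone$, any nonzero vector in $\Uone \ti 0$ is automatically outside $\Ecsg \ti \bbR$. So there is no need to push the forward iterates all the way into $\Eug \ti 0$, and in particular no delicate pointwise-versus-absolute estimate is required --- the paper's proof stops exactly at ``$\lim w^{n_j} \in \Uone \ti 0$, hence not in $\Ecsg \ti \bbR$''.
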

\begin{proof}
    For a point $(x, z)$
    in the fundamental domain $M \ti (h(a), a],$
    define the subspace
    \begin{math}
        U = \Eu_{g_1}(x) \ti 0 \subof T_x M \ti \bbR
    \end{math}
    and consider a non-zero
    vector $(u,v) = (u, 0) \in U.$
    First consider the behaviour as $n \to -\infty.$
    Since
    $\tau(z_n) = 1$ and $\tau'(z_n) = 0$ for all $n < 0,$
    \[
        (u_n, v_n) = (Dg_1^n(u_n), 0) \in \Eu_{g_1}(x_n) \ti 0
    \]
    for all $n < 0.$
    If $\lim n_j = -\infty$ and $\{w^{n_j}\}$ converges to
    a tangent vector based at a point in $M \ti \{1\},$
    then $\lim w^{n_j}$ lies in $E^u_{g_1} \ti 0.$

    Now consider the behaviour as $n \to +\infty.$
    Since $v = 0$ and $Dg_t \Uone \cc \Uone$ for all $|t| \le 1,$
    it is easy to see
    that $u_n \in \Uone$ and $v_n = 0$ for all $n \ge 0.$
    If $\lim n_j = +\infty$ and $\{w^{n_j}\}$ converges to
    a tangent vector based at a point in $M \ti \{0\},$
    then $\lim w^{n_j}$ lies in $\Uone \ti 0.$
    In particular, the limit vector is not in $\Ecsg \ti \bbR.$
    Hence, the hypotheses of \cref{thm:genchaindom} are verified.
\end{proof}
%
%
%

We have now established a partially hyperbolic splitting on all of
$M \ti [0,1].$
To prove dynamical incoherence in the next section,
we need further information about
the center-unstable bundle $\Ecuf.$

\begin{lemma} \label{lemma:trans}
    For all $(x, z) \in M \ti (0, h^2(a)],$
    \begin{enumerate}
        \item the intersection
        $\Ecuf(x, z) \cap (T_x M \ti 0)$
        is a subset of $\Cone(x) \ti 0$
        \item
        $\Ecuf(x, z)$ is transverse to $\Esg(x) \ti \bbR,$ and
        \item
        if $(u, v) \in \Ecuf(x, z)$ with $u \in \Esg(x)$ and $u$ is non-zero,
        then $v$ is non-zero.
    \end{enumerate} \end{lemma}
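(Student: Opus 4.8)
The plan is to run the forward orbit of a vector in $\Ecuf(x,z)$ far enough that it enters the region where $\tau \equiv 0$ and the dynamics is just $Dg$, then use the cone information established in Lemma~\ref{lemma:extendcu} (where $\Ecuf$ was seen to be contained in $\Cone \ti \bbR$ on the fundamental domain, hence $u_2 \in \Cone + \Bone^+$). First I would treat item (1). Fix $(x,z)$ with $0 < z \le h^2(a)$ and let $(u,0) \in \Ecuf(x,z)$ be a nonzero vector with vertical part $0$. Since $z_n = h^n(z) \le h^2(a)$ for all $n \ge 0$, we have $\tau(z_n) = 0 = \tau'(z_n)$ for all $n \ge 0$, so the derivative formula degenerates to $Df^n(u,0) = (Dg^n u, 0)$. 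Because $\Ecuf$ is $Df$-invariant and its restriction to $M \ti \{0\}$ agrees with $\Eug \times 0 \subof \Cone \times 0$ (and more relevantly, because $\Ecuf(x_n,0)$ for $n$ large lies in the unstable cone), I would argue that if $Dg^n u$ failed to lie in $\Cone(x_n)$ for all $n$, domination would be violated: a vector of $\Ecuf$ starting outside $\Cone$ cannot be pulled back along $g$ into the contracting $\Aone$-cone, but $Dg\inv$ expands $\Aone^*$ toward $\Esg$. More carefully, I would invoke that $\Ecug \cc \Cone$ and $Dg\inv(\Cone) \supof \Cone$-type behaviour to conclude $u \in \Cone(x)$, i.e.\ $(u,0) \in \Cone(x) \ti 0$.

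For item (2), transversality of $\Ecuf(x,z)$ with $\Esg(x) \ti \bbR$: the two subspaces have complementary dimension (since $\dim \Ecuf = \dim \Ecug + 1 + \dim \Eug = \dim M + 1 - \dim\Esg = \dim M + 1 - 1$ matches $\dim(\Esg \ti \bbR)$ being $2$ only when $\dim\Esg=1$; in general their dimensions sum to $\dim(M \ti I) = \dim M + 1$), so transversality is equivalent to trivial intersection. Suppose $(u,v) \in \Ecuf(x,z) \cap (\Esg(x) \ti \bbR)$ is nonzero. If $v = 0$ then $u \in \Esg(x)$ is nonzero, and by item (1) $u \in \Cone(x)$; but $\Bone^+ \cap \Cone = \emptyset$ and more to the point $\Esg \sans 0 \subof \Aone^+ \subof \Bone^+$, which is disjoint from $\Cone$ — contradiction. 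So $v \ne 0$, which is exactly the statement of item (3); thus items (2) and (3) are proved together once we rule out the $v=0$ case, and item (2) follows because any nonzero intersection vector would have to have $v\ne 0$, yet I must still exclude the possibility of such a vector with $u \in \Esg$, $v \ne 0$. For that, I would push forward: $Df(u,v) = (Dg_{t_0}(u) + v\tau'(z_0)X(x_1), vh'(z_0))$; since $z \le h^2(a) \le h^2(a)$ we actually have $\tau'(z_0) = 0$, so $u_1 = Dg(u) \in \Esg(x_1)$ and $v_1 = v h'(z)$. Iterating, $u_n \in \Esg(x_n)$ with $\|u_n\| \le \eta^n\|u\| \to 0$ and $|v_n| \ge (\text{const})\cdot|v|$ since $h' \ge \lam$ near $0$ forces... hmm, actually $h'(0)=\lam<1$ so $|v_n|\to 0$ too, but the ratio $\|u_n\|/|v_n|$ is controlled; comparing decay rates of $\Esg$-contraction against $h'$-contraction, one of the normalized limits $w^{n_j}$ would converge into $0 \times \bbR = \Esf$ on $M \ti \{0\}$, contradicting $(u,v) \in \Ecuf$ and the domination $\Esf \oplus_< \Ecuf$.

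The main obstacle I anticipate is item (1): controlling the backward/forward cone position of a vector that sits in the low strip $0 < z \le h^2(a)$ but whose orbit under $f\inv$ climbs back up through the shearing region where $\tau' \ne 0$. Going forward is clean because $\tau \equiv 0$ there and the dynamics reduces to $Dg$; the subtlety is that $\Ecuf(x,z)$ for small $z > 0$ is not literally $\Ecug(x) \ti 0$ — it is some deformation of it, and I must extract from the construction (specifically from the proof of Lemma~\ref{lemma:extendcu}, where $W = \Ecu_{g_1}(x) \ti \bbR$ was the initial guess and $u_2 \in Dg_{t_1}(u_C) + u_B$ with $Dg_{t_1}(u_C) \in \Cone$, $u_B \in \Bone^+$) that the horizontal projection, once we flow down to where $\tau = 0$, lands and stays inside $\Cone$. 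I expect the right formalization is: for $n \ge 2$, $u_n = Dg^{n-2}(u_2)$ and $u_2 \in \Cone + \Bone^+$; invariance of $\Cone$ under $Dg$ (i.e.\ $Dg(\Cone) \cc \Cone$) then keeps $u_n$ eventually in $\Cone$, and pulling this back two steps using that $\tau'$ is only supported on $(h^2(a),a)$ gives the claim at $(x,z)$. Once item (1) is in hand, items (2) and (3) are short arguments via the disjointness $\Bone^+ \cap \Cone = \emptyset$ together with the domination between $\Esf$ and $\Ecuf$ already established.
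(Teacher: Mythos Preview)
There is a genuine gap in your treatment of items (2) and (3), stemming from a dimension miscount. You have $\dim \Ecuf = \dim \Ecg + 1 + \dim \Eug = \dim M$, while $\dim(\Esg \times \bbR) = 2$, so the dimensions sum to $\dim M + 2$, not $\dim M + 1$. Transversality therefore means the intersection $\Ecuf(x,z) \cap (\Esg(x) \times \bbR)$ is exactly one-dimensional, not zero-dimensional (the paper notes this explicitly in the remark following the lemma). Your subsequent attempt to ``exclude the possibility of such a vector with $u \in \Esg$, $v \ne 0$'' is therefore aimed at a false target: such vectors necessarily exist. Your forward-iteration argument for that exclusion is also broken on its own terms: since $\lam < \|Dg\,v^s\|$ while $h'(z_n)=\lam$ near $0$, the ratio $|v_n|/\|u_n\|$ tends to $0$, so the normalized vectors $w^{n}$ accumulate on $\Esg \times 0$, which lies in $\Ecuf(\cdot,0)=T M \times 0$, not in $\Esf = 0 \times \bbR$. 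No contradiction arises.

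For item (1), your forward-from-the-low-strip idea also yields nothing, for the same reason: on $M \times \{0\}$ one has $\Ecuf = TM \times 0$, so limits of $(Dg^n u,0)$ carry no cone information. The paper instead runs \emph{backward} to the fundamental domain $M \times (h(a),a]$, where by construction $\Ecuf = \Ecu_{g_1} \times \bbR$ is known explicitly. It then sets $Y_n \times 0 := \Ecuf(x_n,z_n)\cap (T_{x_n}M \times 0)$, observes that $Y_0 = \Ecu_{g_1} \subset \Cone$ and $Y_{n+1}=Dg_{t_n}(Y_n)$ (because $v=0$ is preserved), and uses $Dg_t(\Cone)\subset\Cone$ to get $Y_n \subset \Cone$ for all $n$. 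This is the ``pulling back to the fundamental domain'' you gesture at in your last paragraph, but tracked cleanly as a subspace rather than via the $u_2 \in \Cone + \Bone^+$ decomposition (which pertains to general $v$, not $v=0$). Once (1) is in hand, $\Cone \cap \Esg = 0$ gives $Y_n \cap \Esg(x_n)=0$; since $\dim Y_n = \dim \Ecug$ one gets $Y_n \oplus \Esg(x_n)=T_{x_n}M$, and items (2) and (3) follow immediately from this direct-sum decomposition.
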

\begin{remark}
    The transverse intersection in item (2) of \cref{lemma:trans}
    is one-dimensional.
    This will hold true even in the setting of \cref{sec:high},
    where we drop the restriction that
    dim $\Esg = 1.$
\end{remark}
\begin{proof}
    Instead of proving this for a point in $M \ti (0, h^2(a)],$
    we consider $(x, z)$ in the fundamental domain $M \ti (h(a), a]$ and
    establish the results for $(x_n, z_n)$ where $n \ge 2.$
    Note that $\Ecuf(x_n, z_n) = Df^n(W)$
    where $W$ is the subspace defined in the proof of \cref{lemma:extendcu}.
    For each $n \ge 0,$ define a subspace $Y_n$ of $T_{x_n} M$ by
    \[
        Y_n \ti 0 = \Ecuf(x_n, z_n) \cap (T_{x_n} M \ti 0).
    \]
    From the definition of $f,$
    observe that
    $Y_{n+1} \ti 0 = Df(Y_n \ti 0)$ for all $n \ge 0.$
    Then
    \[  Y_0 = \Ecu_{g_1} \subof \Cone
        \qandq Y_{n+1} = Dg_{t_n}(Y_n)\ \text{for all $n \ge 0.$}  \]
    It follows by induction that $Y_n \in \Cone$ for all $n \ge 0$
    and establishes item (1).
    Since
    $\Cone(x_n) \cap \Esg(x_n) = 0,$
    this further implies that 
    $Y_n \cap \Esg(x_n) = 0.$
    Using $\dim(Y_n) = \dim(\Ecug),$ we can show by counting dimensions that
    $Y_n \oplus \Esg(x_n) = T_{x_n} M$
    and from this fact items (2) and (3) follow.
\end{proof}    

%
%
%

For the next result, recall that $E^+$ denotes
the set of tangent vectors pointing in the positive direction of $\Esg.$

\begin{lemma} \label{lemma:allpos}
    Consider a point 
    $(x, z) \in M \ti (0, h^2(a)]$ and a vector
    $(u,v) \in \Ecuf(x, z)$ such that $u \in \Esg(x).$
    Then
    $u \in E^+$
    if and only if $v > 0.$
\end{lemma}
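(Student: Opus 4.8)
The plan is to run the fundamental‑domain reduction from the proof of \cref{lemma:trans}, keeping track of the $\Bone^+$‑summand of the tangent vector, and to finish with a connectedness argument inside the line bundle $\Esg$. I would first reduce the lemma to two statements for $(x,z) \in M \ti (0,h^2(a)]$: \textbf{(A)} $\Ecuf(x,z)$ contains no vector $(0,v)$ with $v \neq 0$; and \textbf{(B)} every $(u,v) \in \Ecuf(x,z)$ with $u$ in the positive ray $E^+(x) := E^+ \cap T_xM$ has $v > 0$. Granting (A) and (B), the lemma follows from the fact that $\Ecuf(x,z)$ is a linear subspace: if the given vector has $u = 0$, then $v = 0$ by (A) and both sides of the asserted equivalence are false; if $u \in E^+(x)$, then $v > 0$ by (B) and both sides are true; and if $u \in E^-(x)$, then $(-u,-v) \in \Ecuf(x,z)$ has $-u \in E^+(x)$, so $-v > 0$ by (B) and both sides are false.

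To prove (A) and (B), write $(x,z) = f^n(x_0,z_0)$ with $(x_0,z_0) \in M \ti (h(a),a]$ and $n \ge 2$, so that $\Ecuf(x,z) = Df^n(W)$ with $W = \Ecuf(x_0,z_0) = \Ecu_{g_1}(x_0) \ti \bbR$, and write a vector of $\Ecuf(x,z)$ as $(u_n,v_n) = Df^n(u_0,v_0)$, $(u_0,v_0) \in W$. From the formula for $Df$, $v_n = v_0 \prod_{k=0}^{n-1} h'(z_k)$, and since $h' > 0$ on $[0,1]$ the sign of $v_n$ equals that of $v_0$. In the notation following \cref{lemma:extendcu}, $u_2 = Dg_{t_1}(u_C) + u_B$ with $Dg_{t_1}(u_C) = Dg_{t_1}Dg_{t_0}(u_0) \in Y_2$; the proof of \cref{lemma:extendcu} shows $u_B \in \Bone^+ \sans 0$ when $v_0 > 0$, and since $u_B$ depends linearly on (in fact only on) $v_0$ one gets $-u_B \in \Bone^+ \sans 0$ when $v_0 < 0$ and $u_B = 0$ when $v_0 = 0$. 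When $v_0 \neq 0$, the vector $u_2 = Dg_{t_1}(u_C) + u_B$ is nonzero, since a vector of $\Cone$ cannot cancel a nonzero vector of $\Bone^+$ or of $-\Bone^+$ (as $\Bone^+ \cap \Cone = \emptyset$ and $-\Cone = \Cone$); hence $u_n = Dg^{n-2}(u_2) \neq 0$. This gives (A): a vector $(0,v_n)\in\Ecuf(x,z)$ with $v_n \neq 0$ would have $v_0 \neq 0$, hence horizontal part $u_n \neq 0$, a contradiction. For (B), take $(u,v) = (u_n,v_n)$ with $u_n \in E^+(x_n) \subof \Esg(x_n)\sans 0$; then $v_0 \neq 0$ (else $u_n \in Y_n$, and $Y_n \cap \Esg(x_n) = 0$ by \cref{lemma:trans} forces $u_n = 0$). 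Since $Dg_{t_1}(u_C) \in Y_2$, $Y_n = Dg^{n-2}(Y_2)$, and $Dg^{n-2}$ carries $\Esg(x_2)$ onto $\Esg(x_n)$ preserving orientation, the vanishing of the $Y_n$-component of $u_n$ yields $u_n = Dg^{n-2}\big((u_B)_s\big)$, where $(u_B)_s$ is the $\Esg(x_2)$-component of $u_B$ in the splitting $T_{x_2}M = Y_2 \oplus \Esg(x_2)$ from \cref{lemma:trans}. So (B) will follow once we show $(u_B)_s \in E^+(x_2)$ exactly when $u_B \in \Bone^+$, i.e. exactly when $v_0 > 0$, whence $v_n > 0$; and this reduces to: for every $y \in M$, every subspace $Z \subof \Cone(y)$ complementary to $\Esg(y)$, and every nonzero $w \in \Bone^+(y)$, the projection of $w$ onto $\Esg(y)$ along $Z$ lies in $E^+(y)$.

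To prove this, note that $\Bone = Dg^2(\Aone)$ is a cone family whose defining quadratic form is positive on $\Esg$ and negative on $\Ecug$, so $\Bone(y)\sans 0$ has exactly two connected components, and $\Bone^+(y)$ is one of them, hence is connected. The projection $P \colon T_yM \to \Esg(y)$ along $Z$ is linear, and $P(w) \neq 0$ for $w \in \Bone^+(y)$, since $P(w) = 0$ would put $w \in Z \subof \Cone(y)$, contrary to $\Bone^+ \cap \Cone = \emptyset$. Hence $P(\Bone^+(y))$ is a connected subset of $\Esg(y) \sans 0 = E^+(y) \sqcup E^-(y)$; as $E^+(y) \subof \Aone^+(y) \subof \Bone^+(y)$ and $P$ restricts to the identity on $\Esg(y)$, this set meets $E^+(y)$ and so lies entirely in $E^+(y)$. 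Taking $y = x_2$ and $Z = Y_2$ shows $(u_B)_s \in E^+(x_2)$ when $u_B \in \Bone^+$; applying the statement to $-u_B \in \Bone^+$ when $v_0 < 0$ shows $(u_B)_s \in E^-(x_2)$ there. This establishes (B).

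I expect the genuinely new point to be this last statement: converting the cone condition ``$u_B \in \Bone^+$'' into control of the orientation of an oblique projection onto the line $\Esg$ along the complement $Y_2$, about which we have no a priori information. The connectedness argument settles it because $\Bone^+$ is connected and already contains $E^+$, on which $P$ acts as the identity; everything else parallels the proofs of \cref{lemma:extendcu} and \cref{lemma:trans}.
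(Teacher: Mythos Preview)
Your argument is correct, but it takes a different route from the paper's. The paper observes that by item (2) of \cref{lemma:trans} there is, for each $(x,z) \in M \times (0,h^2(a)]$, a unique $\hat u(x,z) \in \Esg(x)$ with $(\hat u(x,z),1) \in \Ecuf(x,z)$, and by item (3) this $\hat u(x,z)$ is nonzero. Continuity of $\hat u$ together with connectedness of the domain $M \times (0,h^2(a)]$ then reduces the lemma to checking a single point. At a point $(x_2,z_2)$ with $z_2 \in (h^3(a),h^2(a))$ the paper writes $u_2 = Dg_{t_1}(u_C) + u_B$ with $Dg_{t_1}(u_C) \in \Cone$ and $u_B \in \Bone^+$, and argues directly: if $u_2 \in E^-$ then $-u_2 \in E^+ \subset \Bone^+$, so $u_B - u_2 = -Dg_{t_1}(u_C)$ lies in $\Bone^+ \cap \Cone$, a contradiction.

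By contrast, you work pointwise and track the $\Bone^+$-summand $u_B$ all the way to level $n$, then invoke connectedness of the fibre $\Bone^+(y)$ to show that the oblique projection of $\Bone^+(y)$ onto $\Esg(y)$ along any complement contained in $\Cone(y)$ lands in $E^+(y)$. Both arguments rest on the same two facts, $E^+ \subset \Bone^+$ and $\Bone^+ \cap \Cone = \varnothing$, but they deploy the connectedness in different places: the paper uses connectedness of the base $M \times (0,h^2(a)]$, you use connectedness of the cone $\Bone^+(y)$. The paper's route is shorter and avoids your projection lemma entirely; your route has the minor virtue of being local in $(x,z)$ and of isolating a clean geometric statement about cones that might be reusable elsewhere.
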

\begin{proof}
    First define a function
    $\hat u : M \ti (0, h^2(a)] \to \Esg \sans 0$
    as follows:
    for each point $(x, z) \in M \ti (0, h^2(a)],$
    let $\hat u(x, z)$ be the unique $u \in \Esg(x)$
    such that $(u, 1) \in \Ecuf(x, z).$
    \Cref{lemma:trans} shows that such a vector exists, is unique, and is non-zero.
    By continuity, 
    $\hat u$ 
    only takes values in a single connected component
    of $\Esg \sans 0,$
    and so to prove the lemma
    it suffices to show that $\hat u(x, z) \in E^+$ for a single point in
    $M \ti (0, h^2(a)].$

    Consider a point $(x_2, z_2)$ in the subset $M \ti (h^3(a), h^2(a))$
    and define
    \[    
        u_2 = \hat u(x_2, z_2) \in \Esg(x_2) \qandq v_2 = 1
    \]        
    so that $(u_2, v_2) \in \Ecuf(x_2, z_2).$
    From these, define
    \[
        (x, z) = f^{-2}(x_2, z_2)
        \qandq
        (u, v) =  Df^{-2}(u_2, v_2).
    \]
    It follows that 
    $(x, z) \in M \ti (h(a), a),$ $(u, v) \in \Ecuf(x, z),$ and $v > 0.$
    Recall from the remark after
    the proof of \cref{lemma:extendcu}
    that
    $u_2 = Dg_{t_1}(u_C) + u_B$
    with $Dg_{t_1}(u_C) \in \Cone$ and $u_B \in \Bone^+.$
    If $u_2 \in E^-,$ then $-u_2 \in E^+ \subof \Bone^+$ and
    \[
        u_B - u_2 = - Dg_{t_1}(u_C) \in \Bone^+ \cap \Cone,
    \]
    a contradiction since 
    $\Bone^+$ and $\Cone$ are disjoint.
    Therefore, $u_2 \in E^+.$
\end{proof}
%
%
%

As explained at the start of the section,
we extend $f$ to $M \ti [-1,1]$ by symmetry,
and then identify the boundary components to produce an example on $M \ti S^1.$

\section{Dynamical incoherence} \label{sec:incoherent} 

The previous section constructed a partially hyperbolic map
which satisfies all but the last item listed in \cref{thm:oneswitch}.
We now show that this construction
is dynamically incoherent in order complete the proof of the theorem.

In the paper \cite{RHRHU-nondyn}
giving the original dynamically incoherent example on $\bbT^3,$
they explicitly construct the center direction $\Ecf,$
and it is clear from the construction that there cannot be
a foliation tangent to it.
For the examples here, the center is at least two-dimensional
and the dynamics on the attractor $M \ti \{0\}$ can be
a general partially hyperbolic diffeomorphism
(including both dynamically coherent and incoherent cases)
and so we give here a detailed and rigorous proof that the overall system is
dynamically incoherent.

On $M \ti I,$
we denote the coordinate projections by
$\pi_M : M \ti I \to M$
and
$\pi_I : M \ti I \to I.$
For a curve $\gam : [0,1] \to M \ti I,$ we adopt the notation
$\gam_M = \pi_M \circ \gam: [0,1] \to M$
and
$\gam_I = \pi_I \circ \gam: [0,1] \to I$
for the projected curves.

Recall by assumption that $\Esg$ is one-dimensional and oriented.
Therefore any non-zero vector in $\Esg$ will
point in either the positive $E^+$ or negative $E^-$ direction of $\Esg.$
Also recall the constant $c > 0$ used in defining the function $h.$

A $C^1$ curve $\gam : [0,1] \to M \ti (0,c]$ is called a
\emph{falling curve}
if for all $t \in [0,1]$:
\begin{enumerate}
    \item the vector $\gam'(t) \in T(M \ti I)$
    is tangent to $\Ecuf,$ and
    \item
    the vector $\gam_M'(t) \in TM$
    is in $E^-.$
\end{enumerate}    

%

We first observe that $\gam_I$ is a decreasing function,
which justifies the name ``falling.''

\begin{lemma} \label{lemma:falling}
    If $\gam$ is a falling curve, then
    $\gam_I'(t) < 0$
    for all $t.$
\end{lemma}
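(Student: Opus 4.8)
The plan is to fix a parameter $t \in [0,1]$, set $(x,z) = \gam(t)$ and $(u,v) = \gam'(t)$ so that $u = \gam_M'(t)$ and $v = \gam_I'(t)$, and then simply read off the sign of $v$ from \cref{lemma:trans} and \cref{lemma:allpos}. The one thing to check before invoking those lemmas is that $(x,z)$ lies in their common domain $M \ti (0, h^2(a)]$. Since a falling curve takes values in $M \ti (0, c]$, this reduces to the inequality $c \le h^2(a)$, which follows from the standing assumption $c < h^3(a)$ together with $h^3(a) < h^2(a)$; the latter holds because $0 < h^2(a) < 1$ and $h(w) < w$ for all such $w$.

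With the point $(x,z)$ in the right region, the conclusion is immediate. By definition of a falling curve, $(u,v) \in \Ecuf(x,z)$ and $u = \gam_M'(t) \in E^-$. Since $E^-$ is a connected component of $\Esg \sans 0$, the vector $u$ is a non-zero element of $\Esg(x)$. Item (3) of \cref{lemma:trans} then forces $v \ne 0$, and \cref{lemma:allpos} tells us that $u \in E^+$ if and only if $v > 0$. Because $E^+$ and $E^-$ are disjoint and $u \in E^-$, we have $u \notin E^+$, so $v$ is not positive; combined with $v \ne 0$ this yields $v = \gam_I'(t) < 0$, as required.

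I do not anticipate any real obstacle here: all the substantive work has already been done in \cref{lemma:trans} and \cref{lemma:allpos}, and the only point requiring a moment's care is the containment $M \ti (0, c] \subseteq M \ti (0, h^2(a)]$, which is precisely what the assumption $c < h^3(a)$ on the function $h$ was set up to guarantee.
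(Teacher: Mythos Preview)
Your proof is correct and follows essentially the same route as the paper's, which simply cites item (3) of \cref{lemma:trans} for $\gam_I'(t) \ne 0$ and \cref{lemma:allpos} for the sign. Your explicit verification that $c < h^3(a) < h^2(a)$ places the falling curve inside the domain of those lemmas is a detail the paper leaves implicit, but it is a welcome addition rather than a departure in approach.
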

\begin{proof}
    By item (3) of \cref{lemma:trans}, $\gam_I'(t)$ is non-zero
    and by \cref{lemma:allpos} it must be negative.
\end{proof}

\begin{lemma} \label{lemma:deltadrop}
    There is a constant $\delta > 0$ such that
    if $\gam$ is falling curve with $\gam_I(0) \in [h(c), c]$
    and $\length(\gam_M) \ge 1,$
    then
    \[
        \gam_I(0) - \gam_I(1) > \delta.
    \] \end{lemma}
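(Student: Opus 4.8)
The plan is to reduce the statement to a compactness estimate on a region of $M \ti I$ bounded away from $M \ti \{0\}$. The crucial quantity is the ``slope'' of $\Ecuf$ relative to the plane $\Esg(x) \ti \bbR$. By \cref{lemma:trans}(2) and the remark following it, for every $(x,z) \in M \ti (0, h^2(a)]$ the intersection $\Ecuf(x,z) \cap (\Esg(x) \ti \bbR)$ is a one-dimensional line; by \cref{lemma:trans}(3) it is not $\Esg(x) \ti 0$, and applying \cref{lemma:allpos} to a hypothetical vector of the form $(0,v)$ shows it is not $0 \ti \bbR$ either. Hence this line is spanned by a unique vector $(u,v)$ with $u \in E^+$, $\|u\| = 1$, and (by \cref{lemma:allpos}) $v > 0$; write $F(x,z) = v$. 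I would first observe that $F$ is continuous, since $\Ecuf$ is a continuous subbundle (being part of a dominated splitting, by \cref{lemma:extendcu}), $\Esg$ is continuous and oriented, and the intersection varies continuously because it is transverse of constant dimension. Consequently $F$ attains a positive minimum $\kappa_0 > 0$ on any compact subset of $M \ti (0, h^2(a)]$.

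Next, using $c < h^3(a) < h^2(a)$, the set $K := M \ti [h(c)/2,\, c]$ is a compact subset of $M \ti (0, h^2(a)]$; let $\kappa_0$ be the minimum of $F$ on $K$ and set $\delta := \tfrac12 \min\{\kappa_0,\, h(c)\}$. Given a falling curve $\gam$ with $\gam_I(0) \in [h(c), c]$ and $\length(\gam_M) \ge 1$, I distinguish two cases. If $\gam(t) \notin K$ for some $t$, then since $\gam_I$ is decreasing (\cref{lemma:falling}) and $\gam_I(0) \le c$, this forces $\gam_I(t) < h(c)/2$, hence $\gam_I(1) \le \gam_I(t) < h(c)/2 \le h(c) - \delta \le \gam_I(0) - \delta$, and the conclusion holds. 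Otherwise $\gam$ stays in $K$; for each $t$ the vector $\gam'(t) = (\gam_M'(t), \gam_I'(t))$ lies in $\Ecuf(\gam(t))$ with $\gam_M'(t) \in E^-$, so it must be a scalar multiple of the spanning vector $(u,v)$ of the intersection line above. Comparing horizontal parts gives $\gam_M'(t) = -\|\gam_M'(t)\|\, u$, hence $-\gam_I'(t) = F(\gam(t))\,\|\gam_M'(t)\| \ge \kappa_0 \|\gam_M'(t)\|$, where the sign is consistent with $\gam_I'(t) < 0$ from \cref{lemma:falling}. Integrating over $[0,1]$ yields $\gam_I(0) - \gam_I(1) = \int_0^1 (-\gam_I'(t))\,dt \ge \kappa_0 \length(\gam_M) \ge \kappa_0 > \delta$.

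The main obstacle is that the naive bound ``vertical drop is proportional to horizontal length'' fails near $M \ti \{0\}$: there $\Ecuf(x,0) = T_xM \ti 0$ is purely horizontal, so $F(x,z) \to 0$ as $z \to 0$ and no uniform slope bound exists globally on $M \ti (0,c]$. The device above sidesteps this by exploiting the hypothesis $\gam_I(0) \ge h(c)$: a falling curve starting above height $h(c)$ that drops by less than $\delta$ can never enter the danger zone near $z = 0$, so the slope bound $F \ge \kappa_0$ is valid along its entire length. The only genuinely technical points to pin down are the continuity and strict positivity of $F$ on $K$, both of which I expect to follow cleanly from \cref{lemma:trans}, \cref{lemma:allpos}, and continuity of the dominated splitting.
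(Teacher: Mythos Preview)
Your proof is correct. The paper's approach is also compactness-based but organized differently: it normalizes so that $\gam_M$ is parameterized by arc length (hence has length exactly one), observes that all such falling curves are tangent to the continuous line field coming from \cref{lemma:trans}(2), and invokes Arzel\`a--Ascoli to get compactness of this family of curves in $C^1$; then $\gam_I(0)-\gam_I(1)$ is a continuous positive function on a compact set and is therefore bounded below. Your route instead extracts the pointwise ``slope'' $F$ of that line field, bounds it below on a compact box $K$ bounded away from $z=0$, and disposes of curves that leave $K$ by a separate easy case. The paper's argument is shorter but leaves the behavior near $z=0$ implicit; your version makes the role of the hypothesis $\gam_I(0)\ge h(c)$ transparent (it is exactly what keeps the curve away from the region where $F\to 0$) and avoids Arzel\`a--Ascoli entirely.
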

\begin{proof}
    Consider
    the family of all falling curves $\gam : [0,1] \to M \ti (0, c]$
    where $\gam_I(0) \in [h(c), c]$ and where the
    projected curve $\gam_M : [0, 1] \to M$
    is parameterized by arc length
    (and therefore has length exactly one).
    All such curves are tangent to a continuous line field
    given by the transverse intersection in item (2) of \cref{lemma:trans}.
    Therefore in the $C^1$ topology,
    this family is closed and equicontinuous and
    so by the Arzel\`a-Ascoli theorem is compact.
    Since $\gam_I(0) - \gam_I(1)$
    depends continuously on $\gam,$
    it is bounded away from zero.
\end{proof}
\begin{lemma} \label{lemma:fundrop}
    There is a constant $L > 0$ such that
    if $\gam$ is falling curve with $\gam_I(0) \in [h(c), c]$
    and $\length(\gam_M) \ge L,$
    then
    $\gam_I(1) < h(c).$
\end{lemma}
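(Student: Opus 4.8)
The plan is to deduce \cref{lemma:fundrop} from \cref{lemma:deltadrop} by a straightforward iteration argument, using \cref{lemma:falling} to control the behaviour of $\gam_I$ between the successive ``drops'' of size $\delta.$ First I would fix the constant $\delta > 0$ provided by \cref{lemma:deltadrop} and choose $L$ so that $L \delta > c - h(c)$, say $L = \lceil (c - h(c))/\delta \rceil + 1$ (the exact value is immaterial). Given a falling curve $\gam : [0,1] \to M \ti (0,c]$ with $\gam_I(0) \in [h(c), c]$ and $\length(\gam_M) \ge L$, the idea is to subdivide the parameter interval into $L$ consecutive subintervals $[s_{i-1}, s_i]$, $i = 1, \dots, L$, chosen so that $\length(\gam_M|_{[s_{i-1},s_i]}) = 1$ for each $i$ (this is possible since arc length along $\gam_M$ is a continuous nondecreasing function of the parameter and the total length is at least $L$).

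Next I would argue by contradiction. Suppose $\gam_I(1) \ge h(c)$. By \cref{lemma:falling}, $\gam_I$ is strictly decreasing, so $\gam_I(s) \in [h(c), c]$ for every $s \in [0,1]$; in particular each restriction $\gam|_{[s_{i-1},s_i]}$ is itself a falling curve (after reparameterizing its domain to $[0,1]$) with initial height $\gam_I(s_{i-1}) \in [h(c), c]$ and with projected length exactly $1 \ge 1$. Applying \cref{lemma:deltadrop} to each of these $L$ pieces gives $\gam_I(s_{i-1}) - \gam_I(s_i) > \delta$ for $i = 1, \dots, L$. Summing over $i$ telescopes to
\[
    \gam_I(0) - \gam_I(1) > L \delta > c - h(c),
\]
which forces $\gam_I(1) < \gam_I(0) - (c - h(c)) \le c - (c - h(c)) = h(c)$, contradicting the assumption $\gam_I(1) \ge h(c)$. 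Hence $\gam_I(1) < h(c)$, as claimed.

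There is no real obstacle here; the only point requiring a little care is the verification that each subcurve $\gam|_{[s_{i-1},s_i]}$ genuinely satisfies the hypotheses of \cref{lemma:deltadrop} — namely that it stays in $M \ti (0,c]$ (guaranteed by monotonicity of $\gam_I$ once we assume $\gam_I(1) \ge h(c)$, since then $\gam_I$ never drops below $h(c) > 0$) and that its initial height lies in $[h(c), c]$ (again immediate from monotonicity). One should also note that a sub-arc of a falling curve is a falling curve, since conditions (1) and (2) in the definition are pointwise in $t$ and hence inherited by restrictions, and that reparameterizing the domain to $[0,1]$ by an increasing diffeomorphism preserves both conditions. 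With these routine observations in place the telescoping estimate completes the proof.
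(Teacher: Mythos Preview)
Your proof is correct and follows essentially the same approach as the paper: choose $L$ with $L\delta > c - h(c)$, subdivide $\gam$ into $L$ pieces of projected length~$1$, and apply \cref{lemma:deltadrop} to each so that the telescoping sum forces $\gam_I$ below $h(c)$. Your version is more detailed than the paper's (in particular, the contradiction framing cleanly ensures each subcurve's initial height remains in $[h(c),c]$), but the idea is identical.
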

\begin{proof}
    Take $L$ to be an integer large enough that $\delta L > c - h(c).$
    Given a falling curve $\gam$ with projected length at least $L,$
    we can split $\gam$ into $L$ subcurves such that
    \cref{lemma:deltadrop} applies to each of them.
\end{proof}
Recall that the constant $0 < \eta < 1$ satisfies
$ \| Dg v^s \| \le \eta \| v^s \| $ for all $v^s \in \Esg.$

%

\begin{lemma} \label{lemma:falln}
    For each $k,$ if $\bt$ is a falling curve with
    $\bt_I(0) \in [h^{k+1}(c), h^k(c)]$
    and $\length(\bt_M) > L \eta^k,$
    then $\bt_I(1) < h^{k+1}(c).$
\end{lemma}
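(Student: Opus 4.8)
The plan is to reduce the general case to \cref{lemma:fundrop} by pushing the falling curve $\bt$ forward under $f^k$ and tracking how the $I$-coordinate and the projected length behave. The key point is that $f$ acts on the $M\ti\{0\}$-side essentially like $g$ on the $M$-factor and like $h$ on the $I$-factor, and in the relevant range $h(z)=\lam z$, so iterating $f$ both rescales the interval coordinate by factors of $h$ and contracts the horizontal part along $\Esg$ by factors at most $\eta$.

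First I would observe that since $\bt$ takes values in $M\ti(0,c]$ and, by \cref{lemma:falling}, $\bt_I$ is decreasing with $\bt_I(0)\le h^k(c)$, all points of $\bt$ lie in $M\ti(0,h^k(c)]$, hence in $M\ti(0,c]$; in particular $\tau\equiv 0$ there, so $f(x,z)=(g(x),h(z))$ on this region, and more generally $f^j(x,z)=(g^j(x),h^j(z))$ for all $0\le j\le k$ as long as we stay in this range (which we do, since $h$ maps $(0,h^k(c)]$ into itself). Then I would set $\bt^{(k)} = f^k\circ\bt$, a $C^1$ curve in $M\ti(0,c]$. I claim $\bt^{(k)}$ is again a falling curve: item (1) of the definition is immediate since $\Ecuf$ is $Df$-invariant; for item (2), note $(\bt^{(k)})_M = g^k\circ\bt_M$, and its derivative is $Dg^k$ applied to $\bt_M'(t)\in E^-$. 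Since $g$ preserves the orientation of $\Esg$ and $\bt_M'(t)$ lies in the one-dimensional bundle $\Esg$ — wait, it lies in $E^-$, a sub-bundle of $\Esg\sans 0$ — $Dg^k$ maps $E^-$ to $E^-$, so $(\bt^{(k)})_M'(t)\in E^-$ as required. Here one must check that $\bt_M'(t)\in\Esg$ genuinely: by item (2) of the falling-curve definition $\bt_M'(t)\in E^-\subof\Esg$, so this is fine, but I should double-check whether a falling curve's $\gam'$ being tangent to $\Ecuf$ together with $\gam_M'\in E^-$ forces $\gam_M'\in\Esg$ — yes, $E^-\subof\Esg$ by definition, so there is nothing to prove.

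Next I would compute the two quantities. For the interval coordinate: $\bt^{(k)}_I = h^k\circ\bt_I$, and since $\bt_I(0)\in[h^{k+1}(c),h^k(c)]$ and $h(z)=\lam z$ on $(0,c)$, we have $\bt^{(k)}_I(0) = h^k(\bt_I(0)) \in [h^{2k+1}(c), h^{2k}(c)]$... hmm, this is not obviously in $[h(c),c]$ as \cref{lemma:fundrop} requires. I should instead iterate only enough times, or rescale. The cleaner route: since $h(z)=\lam z$ for $z\in(0,c)$, the map $h^k$ on this interval is just multiplication by $\lam^k$, so $\bt^{(k)}_I(t) = \lam^k\,\bt_I(t)$ for all $t$ (using that $\bt_I$ stays in $(0,c)$). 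Thus $\bt^{(k)}_I(0)\in[\lam^k h^{k+1}(c), \lam^k h^k(c)] = [h^{2k+1}(c), h^{2k}(c)]$. To apply \cref{lemma:fundrop} I actually want the starting height in $[h(c),c]$; instead I will apply the \emph{scaled} version of \cref{lemma:fundrop} directly. Since all of \cref{lemma:falling}--\cref{lemma:fundrop} are stated in the affine region where $h(z)=\lam z$, the family of falling curves is invariant under the linear rescaling $(x,z)\mapsto (x,\mu^{-1}z)$ of the $z$-coordinate composed with... no — the honest fix is: \cref{lemma:deltadrop} gives a drop $>\delta$ per unit of projected length \emph{when starting in }$[h(c),c]$; by the same Arzel\`a--Ascoli compactness argument applied at height $[h^{k+1}(c),h^k(c)]$, or simply by rescaling $z\mapsto \lam^{-k}z$ which carries that band to $[h(c),c]$ and multiplies vertical drops by $\lam^{-k}$ while fixing horizontal lengths, a falling curve starting at height in $[h^{k+1}(c),h^k(c)]$ with projected length $\ge L\eta^k$ needs only to achieve a total drop $> h^k(c)-h^{k+1}(c) = \lam^k(c-h(c))$; the rescaled curve starting in $[h(c),c]$ then must drop $>c-h(c)$, which by \cref{lemma:fundrop} requires projected length $>L$. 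So it suffices to show: the $\eta^k$-shorter length bound survives the rescaling. And indeed $(\bt^{(k)})_M = g^k\circ\bt_M$ has $\length \le \eta^k\length(\bt_M)$ because $Dg$ contracts $\Esg$ by at most $\eta$ — wait, I want a \emph{lower} bound going the other way. Let me re-orient: I push \emph{forward} by $f^k$. Then $\bt^{(k)}$ starts at height $\lam^k\bt_I(0)\ge\lam^k h^{k+1}(c)$, which is tiny; that is the wrong direction. I should push \emph{backward}: set $\bt^{(-k)} = f^{-k}\circ\bt$; then $\bt^{(-k)}_I(0) = h^{-k}(\bt_I(0)) = \lam^{-k}\bt_I(0)\in[h(c),c]$ (valid since $h^{-k}$ on $(0,c)$ is multiplication by $\lam^{-k}$ and $\bt_I(0)\in[h^{k+1}(c),h^k(c)] = [\lam^{k+1}c',\lam^k c']$-ish lands in $[h(c),c]$ after scaling). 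And $(\bt^{(-k)})_M = g^{-k}\circ\bt_M$ has projected length $\ge \eta^{-k}\length(\bt_M) > \eta^{-k}\cdot L\eta^k = L$, because $Dg^{-1}$ expands $\Esg$ by at least $\eta^{-1}$ (equivalently $\|Dg v^s\|\le\eta\|v^s\|$). One must also check $\bt^{(-k)}$ is still a falling curve — same argument as before, now with $g^{-k}$, which also preserves the orientation of $\Esg$, so $E^-\mapsto E^-$; and its values stay in $M\ti(0,c]$ since $h^{-k}$ maps $[h^{k+1}(c),h^k(c)]$ into $[h(c),c]\subof(0,c]$ and $\bt_I$ decreasing keeps everything below $h^k(c)$, hence below $c$ after scaling by $\lam^{-k}$... this last monotonicity needs a line of care. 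Then \cref{lemma:fundrop} applied to $\bt^{(-k)}$ gives $\bt^{(-k)}_I(1) < h(c)$, i.e. $\lam^{-k}\bt_I(1) < h(c)$, i.e. $\bt_I(1) < \lam^k h(c) = h^{k+1}(c)$, which is exactly the claim.

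The main obstacle I anticipate is the bookkeeping around the rescaling: making sure that (a) the backward iterate $\bt^{(-k)}$ genuinely lands in the affine region $M\ti(0,c]$ where $h$ is linear — this uses that $\bt_I$ is decreasing (\cref{lemma:falling}) so $\bt_I(t)\le\bt_I(0)\le h^k(c)$ and hence $\lam^{-k}\bt_I(t)\le h(c)<c$, keeping $h^{-k}$ well-defined and linear throughout — and (b) that the falling-curve conditions are preserved under $f^{-k}$, for which the orientation-preservation hypothesis on $g$ (inherited in the construction) is exactly what is needed to keep $\gam_M'\in E^-$. Once those two points are nailed down, everything else is the linear scaling $h^{\pm k}(z) = \lam^{\pm k}z$ and the contraction estimate $\|Dg|_{\Esg}\|\le\eta$ applied $k$ times, and the result follows from \cref{lemma:fundrop}.
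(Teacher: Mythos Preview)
Your proposal is correct and follows exactly the paper's approach: pull back by $f^{-k}$ to obtain a falling curve $\gamma = f^{-k}\circ\bt$ with $\gamma_I(0)\in[h(c),c]$ and $\length(\gamma_M)=\length(g^{-k}\bt_M)>\eta^{-k}\cdot L\eta^k=L$, then apply \cref{lemma:fundrop} to get $\gamma_I(1)<h(c)$ and hence $\bt_I(1)<h^{k+1}(c)$. One small bookkeeping slip: in your point (a) you write $\lam^{-k}\bt_I(t)\le h(c)$, but from $\bt_I(t)\le \bt_I(0)\le h^k(c)=\lam^k c$ you only get $\lam^{-k}\bt_I(t)\le c$, which is exactly what you need for the curve to stay in $M\ti(0,c]$.
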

\begin{proof}
    Consider the curve $\gam = f^{-k} \circ \bt.$
    It is a falling curve with $\gam_I(0) \in [h(c), c]$ and
    \[
        \length(\gam_M)
        \ = \
        \length(g^{-k} \bt_M)
        \ > \
        \eta^{-k} \length(\bt_M) \ > \ L.
    \]
    The previous lemma then shows that
    $\gam_I(1) < h(c)$ and thus
    $\bt_I(1) < h^{k+1}(c).$
\end{proof}
\begin{lemma} \label{lemma:fallshort}
    If $\gam$ is a falling curve, then
    \[
        \length(\gam_M) \le \frac{L}{1 - \eta}.
    \] \end{lemma}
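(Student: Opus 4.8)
The plan is to bound the total projected length of an arbitrary falling curve by splitting it according to which ``dyadic layer'' $[h^{k+1}(c), h^k(c)]$ the interval coordinate $\gam_I$ currently lies in, and then summing a geometric series using \cref{lemma:falln}. Recall from \cref{lemma:falling} that $\gam_I$ is strictly decreasing along any falling curve, so as $t$ runs from $0$ to $1$ the value $\gam_I(t)$ sweeps monotonically downward through the nested intervals $[h^{k+1}(c), h^k(c)]$ for $k = k_0, k_0+1, k_0+2, \dots$, where $k_0$ is the layer containing $\gam_I(0)$. For each $k \ge k_0$, let $\gam^{(k)}$ denote the (possibly empty) subcurve of $\gam$ on which $\gam_I(t) \in [h^{k+1}(c), h^k(c)]$; these subcurves are consecutive and their projected lengths sum to $\length(\gam_M)$.

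First I would observe that any subcurve $\beta$ of a falling curve is again a falling curve, since the two defining conditions (tangency to $\Ecuf$, and $\beta_M'$ pointing in $E^-$) are pointwise conditions inherited by restriction; and after an affine reparameterization of the domain to $[0,1]$ this is literally a falling curve in the sense defined. Now apply \cref{lemma:falln} contrapositively to $\gam^{(k)}$: its starting height $\gam^{(k)}_I(0)$ lies in $[h^{k+1}(c), h^k(c)]$, and its ending height $\gam^{(k)}_I(1)$ is $\ge h^{k+1}(c)$ (it has not yet dropped out of the layer, or it is the last nonempty layer), so we cannot have $\gam^{(k)}_I(1) < h^{k+1}(c)$; hence $\length\big(\gam^{(k)}_M\big) \le L\eta^k$. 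Summing over $k \ge k_0 \ge 0$ gives
\[
    \length(\gam_M) \;=\; \sum_{k \ge k_0} \length\big(\gam^{(k)}_M\big)
    \;\le\; \sum_{k=0}^{\infty} L \eta^k \;=\; \frac{L}{1-\eta}.
\]

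The one technical point that needs care — and which I expect to be the main (minor) obstacle — is the bookkeeping at the two ends: the first subcurve $\gam^{(k_0)}$ may start strictly below $h^{k_0}(c)$ rather than at it, but that only makes its starting height smaller, which is harmless since \cref{lemma:falln} is applied via its contrapositive and a smaller starting height still forces the same conclusion (indeed one can always extend $\gam$ slightly upward, or simply note the hypothesis $\bt_I(0) \in [h^{k+1}(c), h^k(c)]$ is satisfied); and the last nonempty subcurve may stop partway through its layer, in which case $\gam^{(k)}_I(1) \ge h^{k+1}(c)$ still holds. In either case the bound $\length(\gam^{(k)}_M) \le L\eta^k$ is valid, and a falling curve whose domain is a single point contributes nothing. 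A falling curve is by definition valued in $M \ti (0,c]$, so $\gam_I(0) \le c = h^0(c)$ forces $k_0 \ge 0$, which is what makes the geometric series start at $k=0$ and produces exactly the claimed constant $L/(1-\eta)$.
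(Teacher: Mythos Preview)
Your proof is correct and follows essentially the same approach as the paper: split the falling curve into subcurves according to the layers $M \times [h^{k+1}(c), h^k(c)]$, bound each piece by $L\eta^k$ via \cref{lemma:falln}, and sum the geometric series. The paper's proof is more terse and leaves the contrapositive application of \cref{lemma:falln} and the endpoint bookkeeping implicit, but the argument is the same.
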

\begin{proof}
    Split $\gam$ into a concatenation of falling curves
    $\gam_k$ indexed by some finite $S \subof \bbN$ and
    where each $\gam_k$ lies in $M \ti [h^{k+1}(c), h^k(c)].$
    Then
    \[
        \length(\gam_M)
        \ = \
        \sum_{k \in S} \length(\pi_M \gam_k)
        \ \le \
        \sum_{k \in \bbN} L \eta^{-k}
        \ = \
        \frac{L}{1 - \eta}.
        \qedhere
    \] \end{proof}
This shows that if we try to continue a falling curve as far as possible,
then after finite length the result is a curve that ends
at the attractor $M \ti \{0\}.$
We use this to show dynamical incoherence.

\begin{proof}
    [Proof of dynamical incoherence.]
    Suppose there is a foliation on $M \ti S^1$
    tangent to $\Ecuf.$
    Pick a point $(x,z) \in M \ti (0,c]$
    and define an immersed submanifold
    $S = \Wsg(x) \ti (-c, c).$
    By the transversality established in \cref{lemma:trans},
    the \emph{cu}-foliation on $M \ti S^1$ induces
    a one-dimensional foliation $\cF$ on $S.$
    Let $L$ be the leaf of $\cF$ through the point $(x, z)$ and
    let $\gam : [0, \infty) \to L$
    be a curve of infinite length starting at $\gam(0) = (x, z)$
    such that $\gam_M'(t)$ lies in $E^-$
    for all $t \ge 0.$
    \Cref{lemma:fallshort} implies that there is $T > 0$
    such that $\gam(T) \in \Wsg(x) \ti \{0\}.$

    \begin{figure}
        \centering
        \includegraphics{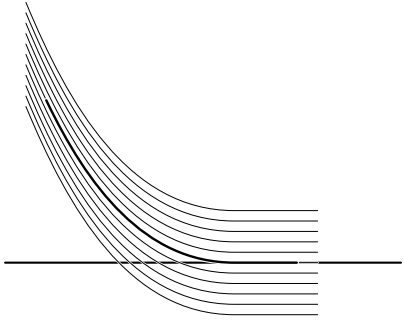}
        \caption{
        The foliation chart around $\gam([0,T])$
        considered in the proof of dynamical incoherence.
        The horizontal line represents $z = 0$ and the foliation 
        chart contains segments of leaves which cross this line.}
        \label{fig:folnbox}
    \end{figure}
    As $\gam([0,T])$ is compact,
    we can find a foliation chart of $\cF$ containing $\gam([0,T])$
    in its interior, as shown in \cref{fig:folnbox}.
    Consequently, there exists a curve
    $\al : [0, T] \to M \ti (-c, c)$
    lying in a single leaf of $\cF$ such that
    $\al$ starts above $M \ti \{0\}$
    and ends below $M \ti \{0\}.$
    That is,
    $\al_I(T) < 0 < \al_I(0).$
    Using both \cref{lemma:falling} and
    the symmetry of the definition of $f(x,-z)$ in terms of $f(x, z),$
    we can show that $\al_I'(t) > 0$
    for all $t$ with $\al_I(t) < 0$ and this gives a contradiction.
\end{proof}

\section{Anosov flows} \label{sec:flows} 

Say $\phi$ is an Anosov flow on a manifold $M.$
To keep notation consistent with partially hyperbolic diffeomorphisms,
we write the invariant splitting as
\[
    TM = \Esphi \oplus \Ecphi \oplus \Euphi
\]
where $\Esphi$ is the strong stable foliation of the Anosov flow,
$\Euphi$ is the strong unstable foliation, and
$\Ecphi$ is the one-dimensional subbundle given by the direction of the flow.
There are constants $0 < \lam < \eta < 1 < \mu$ such that
\[
    \lam < \| D \phi_1 v^s \| < \eta \qandq \mu < \| D \phi_1 v^u \|
\]
for any strong stable unit vector $v^s \in \Esphi$
and any strong unstable unit vector $v^u \in \Euphi.$
Take an integer $N > 0$ large enough that $\eta^N < \lam$
and define $g : M \to M$ to be the time-$N$ map
$g = \phi_N.$
Then $g$ is partially hyperbolic with exactly the same splitting of $TM$
as the Anosov flow
and $ \| Dg v^s \| < \lam$
for any stable unit vector $v^s \in \Esg = \Esphi.$
As in the construction in \cref{sec:construction},
define cone families $\Aone, \Bone, \Cone,$ and $\Uone$
based on the partially hyperbolic splitting of $g.$
We may freely assume that the metric on $M$ is adapted to the Anosov
flow and that the cone families are defined based on the Anosov splitting
so 
that the inclusions
\[
    \Aone \cc D \phi_t(\Aone),
    \quad
    D \phi_t(\Cone) \cc \Cone
    \qandq
    D \phi_t \Uone \cc \Uone
\]
hold for all $t > 0.$

As before,
define the vector field $X \subof \Aone^+$ and
the corresponding (non-Anosov) flow $\sig.$
Since we now have two flows, we will actually
use $X_\sig$ to denote the vector field generating $\sig$ and
use $X_\phi$ to denote the vector field generating $\phi.$
Up to rescaling $X_\sig,$
we may freely assume that
the composition $g_t = \sig_t \circ g$ satisfies
\[
    \| Dg_1 v^s \| < \lam < \| Dg_1 v^c \| < \mu < \| Dg_1 v^u \|
\]
for all unit vectors
$v^s \in \Es_{g_1},$ 
$v^c \in \Ec_{g_1},$ 
and
$v^u \in \Eu_{g_1}.$

\begin{figure}

\centering
\includegraphics{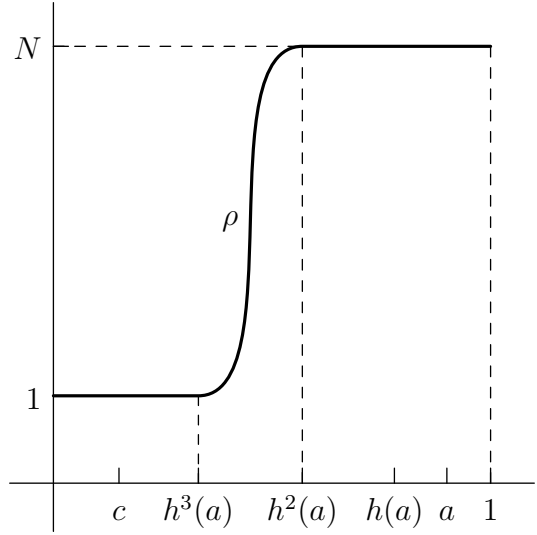}
\caption{The graph of the function $\rho$.}
\label{fig:rho}
\end{figure}
Define the functions $h$ and $\tau$ as before, with the caveat that
the constant $\lam$ used is now the one defined in the current section.
As depicted in \cref{fig:rho},
define a smooth bump function $\rho : [0, 1] \to [1, N]$ such that
\begin{enumerate}
    \item $\rho(z) = 1$ for $0 \le z \le h^3(a),$
    \item
    $\rho'(z) > 0$ for $h^3(a) < z < h^2(a),$ and
    \item
    $\rho(z) = N$ for $h^2(a) \le z \le 1.$
\end{enumerate}
Define $f : M \ti I \to M \ti I$ by
\[
    f(x, z) =
    \begin{cases}
        ( g_{\tau(z)}(x), h(z) )   & \text{if } z \ge h^2(a), \\
        ( \phi_{\rho(z)}(x), h(z) ) & \text{if } z \le h^2(a).
    \end{cases}  \]
This function is well defined at $z = h^2(a)$
and is as smooth as the Anosov flow $\phi.$
The function is partially hyperbolic on $M \ti \{0,1\}$
with the same splittings on $M \ti \{0\}$ and $M \ti \{1\}$
as stated in \cref{sec:construction}.
Importantly, it is
absolutely partially hyperbolic on the subset
$M \ti \{0,1\}$ since
\[
    \| Df v^s \| \le \lam < \| Df v^c \| < \mu < \| Df v^u \|
\]
for all points $p \in M \ti \{0,1\}$ and unit vectors
$v^s \in \Esf(p), v^c \in \Ecf(p), v^u \in \Euf(p).$

In adapting the proofs from \cref{sec:construction},
we have the additional complication that in the region
$M \ti [h^3(a), h^2(a)]$
vertical vectors are sheared in the direction of the Anosov flow, $\phi.$
However, since this shearing is exactly in the center direction
$\Ecg = \Ecphi,$ we can recover the proofs.

Consider a point $p = (x, z)$ in the fundamental domain
$M \ti (h(a), a],$ and a tangent vector
$(u, v) \in T_x M \ti \bbR$ is based at $p.$
In the region $M \ti [h^2(a), 1],$ the function $f$ is defined
exactly as before, so the formulas for $u_1$ and $u_2$
(as given in the proof of \cref{lemma:extendcu})
are unchanged.
After that,
\[
    u_3 \ =\ D \phi_r(u_2)\ +\ v_2 \cdot \rho'(z_2) \cdot X_\phi(x_3),
\]
where, for the rest of the section, $r$ denotes the value $r = \rho(z_2).$
This formula implies that
\[    
    u_3 \ \in\ D \phi_r(u_2)\ +\ \Ecg(x_3)
\]
where again we stress that $\Ecg = \Ecphi.$
For $n \ge 3,$
$u_{n+1} = D \phi_1(u_n).$

We now list the changes needed in order to adapt the proof
of \cref{thm:oneswitch} to give a proof of \cref{thm:anosovswitch}.

\begin{itemize}
    \item \textbf{Adapting the proof of \cref{lemma:extendcu}:}
    Combining $u_2 \in u_B + \Cone(x_2)$ with $u_3 \in D \phi_r(u_2) + \Ecg(x_3)$
    and using
    $\Ecg(x_3) = D \phi_r( \Ecg(x_2) )$
    yields
    \[
        u_3\ \in \ D \phi_r \big(u_B + \Cone(x_2) + \Ecg(x_2) \big)
            \ =\ D \phi_r \big(u_B + \Cone(x_2) \big).
    \]
    Since
    $D \phi_r : T_{x_2} M \to T_{x_3} M$ is a linear isomorphism
    and any element of $u_B + \Cone(x_2)$ is non-zero,
    $u_3$ is non-zero.
    Then $u_n = D \phi_1^{n-3}(u_3)$ for all $n \ge 3$ and
    the rest of the proof then follows as before
    with $\phi_1$ in place of $g.$
    \item
    \textbf{Adapting the proof of \cref{lemma:extendu}:}
    The fact that $u_n \in \Uone$ for all $n > 0$
    now relies on the property that
    $D \phi_t(\Uone) \subof \Uone$ for all $t > 0,$
    but otherwise the proof is unchanged.
    \item
    \textbf{Adapting the proof of \cref{lemma:trans}:}
    In the flow case,
    $Y_3 = D \phi_r(Y_2) \subof \Cone$
    and
    $Y_{n+1} = D \phi_1(Y_n)$
    for $n \ge 3.$
    Then as $D \phi_t (\Cone) \subof \Cone$ for any $t > 0,$
    it follows by induction that $Y_n \in \Cone$ for all $n \ge 0$
    and the rest of the proof is as before.
    \item
    \textbf{The proof of \cref{lemma:allpos}} is unchanged.
    \item
    \textbf{The proof of \cref{lemma:falln}}
    uses $\phi_1$ in place of $g,$ but otherwise \cref{sec:incoherent} is unchanged.
\end{itemize}

\section{Higher dimensional stable bundle} \label{sec:high} 

We now adapt the proofs to the case of a higher-dimension stable bundle.
We first look at the case where the stable bundle has a splitting into 
a ``strong stable'' bundle $\Essg$ of any dimension and
a ``weak stable'' bundle $\Ewsg$ which must be one-dimensional.
\begin{prop} \label{prop:highswitch}
    Suppose $g : M \to M$ is a partially hyperbolic diffeomorphism such that
    the stable bundle has a subsplitting
    \[
        \Esg = \Essg \oplus_< \Ewsg
    \]
    where dim $\Ewsg = 1$ and $g$ preserves an orientation of $\Ewsg.$
    Then there is a partially hyperbolic diffeomorphism
    $f : M \ti S^1 \to M \ti S^1$ such that
    \begin{enumerate}
        \item on the invariant submanifold $M \ti \{0\},$
        the partially hyperbolic splitting is given by
        \[
            \Esf = \Essg \ti \bbR,
            \quad
            \Ecf = (\Ewsg \oplus \Ecg) \ti 0,
            \qandq
            \Euf = \Eug \ti 0;
        \]
        \item
        $f(x,0) = (g(x), 0)$ for all $x \in M;$ and
        \item
        $f$ is dynamically incoherent.
    \end{enumerate} \end{prop}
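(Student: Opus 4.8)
The plan is to mimic the construction of Section \ref{sec:construction} as closely as possible, using the weak stable bundle $\Ewsg$ in place of the one-dimensional $\Esg$ there, while carrying the strong stable bundle $\Essg$ along passively. First I would fix constants $0 < \lam < \eta < 1 < \mu$ so that $\lam < \|Dg\,v^{ws}\| < \eta$ for unit $v^{ws} \in \Ewsg$ and $\mu < \|Dg\,v^u\|$ for unit $v^u \in \Eug$; additionally, by the domination of the subsplitting $\Esg = \Essg \oplus_< \Ewsg$, shrinking $\eta$ if necessary, $\|Dg\,v^{ss}\| < \lam$ for unit $v^{ss} \in \Essg$ (this uses domination to push the strong stable rates strictly below the weak stable rates). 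Then I would build cone families $\Aone \supset \Ewsg$, $\Bone = Dg^2(\Aone)$, $\Cone = Dg(\Bone^*)$, and $\Uone \supset \Eug$ exactly as before, but now requiring in addition that $\Essg \oplus \Ecug \cc \Aone^*$, i.e. the dual cone $\Aone^*$ contains the ``everything except the weak stable line'' bundle $\Essg \oplus \Ecg \oplus \Eug$. The orientation hypothesis on $\Ewsg$ gives the splitting $\Aone \sans 0 = \Aone^+ \cup \Aone^-$ with the chain $\Aone^+ \cc Dg(\Aone^+) \cc \Bone^+ \cc Dg(\Bone^+)$ just as in Section \ref{sec:construction}. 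The smooth approximating vector field $X \subof \Aone^+$, the flow $\sig_t$, the maps $g_t = \sig_t \circ g$, the functions $h$, $\tau$, and the diffeomorphism $f(x,z) = (g_{\tau(z)}(x), h(z))$ are then defined verbatim.

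Next I would verify partial hyperbolicity of $f$ on $M \ti [0,1]$ by re-running Lemmas \ref{lemma:extendcu} and \ref{lemma:extendu} with \cref{thm:genchaindom}. The only change is bookkeeping: on $M \ti \{0\}$ the strong contraction of $h$ puts the $S^1$ fiber direction together with $\Essg$ into $\Esf$, so $\Esf(x,0) = \Essg(x) \ti \bbR$, while $\Ecf(x,0) = (\Ewsg \oplus \Ecg)(x) \ti 0$ and $\Euf(x,0) = \Eug(x) \ti 0$; on $M \ti \{1\}$ the splitting is the one inherited from $g_1$ with the fiber in the center, exactly as before. In the analogue of \cref{lemma:extendcu} one takes $W = (\Ewsg \oplus \Ecg \oplus \Eug)_{g_1}(x) \ti \bbR$ (the "$\Aone^*$-side" plus the fiber); the shearing argument showing $u_2 \in \Cone + \Bone^+ + \Aone^+$ is non-zero goes through unchanged since the shear is still in the $\Aone^+$ direction. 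For the limit as $n \to +\infty$ one now needs the gap between the strong-stable/fiber rates (all $< \lam$) and the weak-stable rate; the inequality $\|u_{n+1}\| \ge (\lam + \delta)\|u_n\|$ for the weak-stable component of $u_n$, together with $|v_{n+1}| \le \lam|v_n|$ and $\|Dg^n v^{ss}\| / \|Dg^n v^{ws}\| \to 0$ from domination, forces the normalized limit into $(\Essg \oplus \Ecg \oplus \Eug) \ti 0 = T M \ti 0 \sans (\Esf$-on-$M\ti\{0\})$, which is what \cref{thm:genchaindom} requires. \cref{lemma:extendu} is unchanged since it only concerns $\Uone$ and the fiber is irrelevant there.

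Then I would re-derive the transversality results. The analogue of \cref{lemma:trans} holds with the same proof: $Y_0 = (\Ewsg \oplus \Ecg \oplus \Eug)_{g_1} \subof \Cone$ and $Y_{n+1} = Dg_{t_n}(Y_n)$, so $Y_n \in \Cone$ for all $n$, hence $Y_n \cap \Ewsg(x_n) = 0$ (using $\Cone \cap \Ewsg = 0$), and by dimension count $Y_n \oplus \Ewsg(x_n) = T_{x_n}M$; thus $\Ecuf(x,z)$ is transverse to $\Ewsg(x) \ti \bbR$ with one-dimensional intersection, as the remark anticipates. \cref{lemma:allpos} is likewise unchanged: the key fact is still that $\Bone^+$ and $\Cone$ are disjoint, together with $u_2 = Dg_{t_1}(u_C) + u_B$ with $Dg_{t_1}(u_C) \in \Cone$, $u_B \in \Bone^+$. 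Finally, dynamical incoherence follows exactly as in Section \ref{sec:incoherent}: a falling curve is now one whose $\Ecuf$-tangent satisfies $\gam_M'(t) \in E^-$ where $E^-$ is the negative component of $\Ewsg \sans 0$, the "falling" lemma uses \cref{lemma:allpos} applied to the $\Ewsg$-direction, the length estimates use $\eta$ (the upper weak-stable rate), and the foliation-chart contradiction at $M \ti \{0\}$ is identical. The only real obstacle is the first one I flagged: ensuring the domination of $\Essg \oplus_< \Ewsg$ really does let one choose cone families and constants so that the strong-stable directions are swept into $\Esf$ on $M \ti \{0\}$ and do not interfere with the weak-stable shearing; once the rates are arranged with $\Essg$ strictly faster-contracting than the chosen $\lam$, everything else is a transcription of the one-dimensional proof.
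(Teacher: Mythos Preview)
Your approach has a genuine gap at the very first step: a cone family $\Aone$ built around the one-dimensional bundle $\Ewsg$ alone, with dual containing $\Essg \oplus \Ecug$, does \emph{not} satisfy $\Aone \cc Dg(\Aone)$. The bundle $\Ewsg$ sits in the middle of the domination chain $\Essg \oplus_< \Ewsg \oplus_< \Ecug$, so neither $Dg$ nor $Dg^{-1}$ carries a thin cone about $\Ewsg$ into itself; under $Dg^{-1}$ the $\Essg$-component of a vector grows relative to its $\Ewsg$-component and the vector leaves $\Aone$. Hence the chain $\Aone^+ \cc Dg(\Aone^+) \cc \Bone^+ \cc Dg(\Bone^+)$ that you invoke simply fails, and with it the shearing argument for $u_2$. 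The paper resolves this by keeping $\Aone$ as a cone around the \emph{full} stable bundle $\Esg$ (which is genuinely backward-invariant) and introducing a second cone family $\Eone$ associated to $\Ewsg \oplus \Ecug$ (which is forward-invariant); the half-cones $\Aone^+$ and $\Bone^+$ are then defined as connected components of the intersections $\Aone \cap Dg(\Eone)$ and $\Bone \cap \Eone$, which are neighbourhoods of $\Ewsg$ and split into two pieces by the codimension-one bundle $\Essg \oplus \Ecug$.

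There is a second, related gap. In the adaptation of \cref{lemma:extendcu} it no longer suffices to show $u_2 \ne 0$: one must show $u_2 \notin \Essg$, so that the normalized forward orbit of $(u,v)$ stays away from $\Essg \ti \bbR = \Esf|_{M \ti \{0\}}$. Since $u_2$ is a sum of a vector in $\Bone^+$ and one in $\Cone$, this amounts to $(\Bone^+ + \Cone) \cap \Essg = \varnothing$, which is not automatic; the paper proves it by a compactness argument after taking $\Cone = Dg^n(\Bone^*)$ for $n$ sufficiently large rather than $n=1$. A few smaller points also need repair: your subspace $W$ should remain $\Ecu_{g_1}(x) \ti \bbR$ (adding $\Ewsg$ makes the dimension too large by one, and correspondingly your claim $Y_n \oplus \Ewsg(x_n) = T_{x_n}M$ in the transversality lemma is dimensionally impossible when $\dim \Essg \ge 1$), and the analogue of \cref{lemma:allpos} must be restated with $E^+ + \Essg$ in place of $E^+$. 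These are fixable once the cone setup is corrected, but they are not mere transcription.
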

In this section, we adapt the earlier arguments in order to prove the
proposition. Then in the next section, we use the proposition to prove
\cref{thm:multiswitch}.
Therefore, for the remainder of this section, assume $g$
satisfies the hypotheses of \cref{prop:highswitch}.

The biggest modification is the introduction of a cone family $\Eone$
which is transverse to $\Essg.$
In the proofs, we ensure that all of the vectors of interest stay inside $\Eone$
and are therefore bounded in angle away from $\Essg.$
In the original proof of \cref{thm:oneswitch} given in \cref{sec:construction},
we first chose a cone family $\Aone$ associated to $\Ecug$ and then chose a smooth
vector field $X$ lying inside of $\Aone.$
As we will explain later, in order to use \cref{prop:highswitch} as a step in proving
\cref{thm:multiswitch}, we need to reverse these steps and choose $\Aone$ and $\Eone$
based on a given $X.$

Let $X$ be a smooth vector field transverse to
$\Essg \oplus \Ecug;$
that is, $X(x) \notin \Essg \oplus \Ecug$ for all $x \in M.$

\begin{lemma} \label{lemma:xgiven}
    There are cone fields
    $\Aone$ associated to $\Esg$ and
    $\Eone$ associated to $\Ewsg \oplus \Ecug$
    such that
    \begin{enumerate}
        \item $X$ lies in $\Aone \cup Dg^2(\Eone)$ and
        \item
        the intersection $Dg^2(\Aone) \cap \Eone \cap (\Essg \oplus \Ecug)$
        consists only of zero vectors.
    \end{enumerate} \end{lemma}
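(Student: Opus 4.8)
The plan is to build $\Eone$ first, then $\Aone$, exploiting that $X$ is bounded away from $\Essg \oplus \Ecug$ while the center-unstable and weak-stable directions together avoid $\Essg$. First I would fix a continuous splitting-adapted metric and observe that, by compactness of $M$ and the transversality hypothesis on $X$, there is a uniform angular gap $\theta_0 > 0$ between $X(x)$ and the subspace $\Essg(x) \oplus \Ecug(x)$ for every $x$. The subbundle $\Ewsg$ is one-dimensional and $g$-invariant, so the plane field $\Ewsg \oplus \Ecug$ is a genuine invariant subbundle; I would choose $\Eone$ to be a narrow cone family around $\Ewsg \oplus \Ecug$, narrow enough that (i) every nonzero vector in $\Eone$ makes angle less than $\theta_0/2$ with $\Ewsg \oplus \Ecug$, hence angle bounded away from zero with $X$, and (ii) $Dg^2(\Eone) \cc \Eone$, using the domination $\Essg \oplus_< (\Ewsg \oplus \Ecug)$ — i.e. $\Ewsg \oplus \Ecug$ is the ``unstable-ish'' direction relative to $\Essg$, so $Dg$ contracts the complementary cone. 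Then $Dg^{-2}(\Eone)$ is a slightly wider cone still bounded away from $X$; pushing forward, $Dg^2(\Eone)$ is a still-narrower cone around $\Ewsg \oplus \Ecug$.

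Next I would pick $\Aone$ associated to $\Esg = \Essg \oplus \Ewsg$: a cone family with $\Esg \cc \Aone \cc Dg(\Aone)$ and $\Ecug \cc \Aone^* = \overline{TM \sans \Aone}$, exactly as in \cref{sec:construction}. The freedom here is in how wide to make $\Aone$. I would shrink $\Aone$ so that $Dg^2(\Aone)$ is a thin cone around $\Esg$; since $\Esg = \Essg \oplus \Ewsg$ and $\Eone$ is a thin cone around $\Ewsg \oplus \Ecug$, the two thin cones $Dg^2(\Aone)$ and $\Eone$ meet only near the common line $\Ewsg$. For item (2), a vector in $Dg^2(\Aone) \cap \Eone$ lies within a small angle of $\Esg$ and within a small angle of $\Ewsg \oplus \Ecug$; intersecting, it lies within a small angle of $\Ewsg$. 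On the other hand a nonzero vector of $\Essg \oplus \Ecug$ is bounded away from $\Ewsg$ by domination (it has no $\Ewsg$-component, and $\Ewsg$ is transverse to $\Essg \oplus \Ecug$ since $\Essg \oplus \Ewsg \oplus \Ecug \oplus \Eug = TM$). So for $\Aone, \Eone$ chosen narrow enough the triple intersection in (2) is $\{0\}$, as required. For item (1), I must check $X(x) \in \Aone(x) \cup Dg^2(\Eone)(x)$ for every $x$: this is where the angular bookkeeping has to be made precise. The cone $\Aone$ contains everything within a fixed angle of $\Esg$, and $Dg^2(\Eone)$ contains everything within a fixed angle of $\Ewsg \oplus \Ecug$; the union of these two cones covers a full neighbourhood of the plane $\Ewsg \oplus (\Essg \oplus \Ecug)$ — wait, more carefully: the complement of $\Aone \cup Dg^2(\Eone)$ consists of directions far from $\Esg$ and far from $\Ewsg \oplus \Ecug$. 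I would argue that $X$, being within angle $\theta_0$-comparable distance of nothing in $\Essg \oplus \Ecug$ is not automatic — so instead I would build $\Aone$ to directly contain $X$ wherever $X \notin Dg^2(\Eone)$, using that $X \notin \Essg \oplus \Ecug$ means $X$ has a nonzero $\Ewsg$-component, i.e. $X$ already lies within a bounded angle of $\Esg \oplus \Ecug$ minus... Concretely: decompose $X = X_{ss} + X_{ws} + X_{cu}$ along $\Essg \oplus \Ewsg \oplus \Ecug$; if $\|X_{ss}\|$ is small relative to $\|X_{ws}\| + \|X_{cu}\|$ then $X \in Dg^2(\Eone)$ once $\Eone$ is not too thin, and if $\|X_{cu}\|$ is small relative to $\|X_{ss}\| + \|X_{ws}\|$ then $X \in \Aone$; the remaining case, both $\|X_{ss}\|$ and $\|X_{cu}\|$ comparable to $\|X_{ws}\|$, still puts $X$ inside a cone around $\Esg \oplus \Ecug$ that I can arrange to lie in $\Aone \cup Dg^2(\Eone)$ provided $\Ewsg \neq 0$ component is present — which it is, by hypothesis. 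I would formalize this by a single compactness argument: the function $x \mapsto$ (distance from the direction $X(x)$ to $\Aone(x) \cup Dg^2(\Eone)(x)$) depends continuously on the cone parameters and on $x$, and equals a negative number (interior membership) as the cones shrink, provided the one fixed constraint $X \notin \Essg \oplus \Ecug$ holds — so choose the parameters accordingly.

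The main obstacle I expect is item (1) together with (2) simultaneously: (1) wants $\Aone$ and $\Eone$ wide enough to swallow $X$ everywhere, while (2) wants $Dg^2(\Aone)$ and $\Eone$ thin enough that their intersection with $\Essg \oplus \Ecug$ collapses to zero. Reconciling these is possible only because $X$ is transverse to $\Essg \oplus \Ecug$ (so $X$ is ``visible'' from the $\Ewsg$-direction), but the quantitative trade-off — how thin $Dg^2(\Aone)$ can be made while still, after two forward iterates, $\Aone$ itself contains the prescribed $X$ — needs the invariance inclusions $\Aone \cc Dg(\Aone)$ and $Dg^2(\Eone) \cc \Eone$ to ``spread'' the cones on iteration; the cleanest route is to fix a target thinness $\epsilon$ for $Dg^2(\Aone)$ and $\Eone$ dictated by the $\Ewsg$-gap in (2), then enlarge the cones $\Aone, \Eone$ themselves (not their $Dg^2$-images, which stay $\epsilon$-thin on the relevant compact set only where $\tau \equiv 1$; elsewhere we only need the plain invariance, not thinness) until (1) holds. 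I would carry out: (a) fix the metric and the three angular constants from $X \notin \Essg \oplus \Ecug$, from domination of $\Essg$ inside $\Esg$, and from $\Ewsg$ transverse to $\Essg \oplus \Ecug$; (b) choose $\Eone$ as a cone around $\Ewsg \oplus \Ecug$ with $Dg^2(\Eone) \cc \Eone$ and $Dg^2(\Eone)$ $\epsilon$-thin; (c) choose $\Aone$ as a cone around $\Esg$ with $\Esg \cc \Aone \cc Dg(\Aone)$, $\Ecug \cc \Aone^*$, and $Dg^2(\Aone)$ $\epsilon$-thin; (d) verify (2) by the angle-intersection computation above; (e) verify (1) pointwise by the three-case decomposition of $X$, enlarging $\Aone$ and $\Eone$ if needed — this last enlargement does not disturb (2) because (2) only constrains the doubly-iterated cones, and a small further opening of $\Aone$ keeps $Dg^2(\Aone)$ within the tolerance. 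Routine cone-field continuity and compactness of $M$ then finish the proof.
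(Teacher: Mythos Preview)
Your plan does not resolve the tension you yourself identify, and step~(e) contains a genuine error. Item~(2) of the lemma constrains $Dg^2(\Aone)$ and $\Eone$, \emph{not} ``the doubly-iterated cones'' of both: it uses $\Eone$ directly, without iteration. Since the associations force $\Aone \cc Dg(\Aone) \cc Dg^2(\Aone)$ (the stable cone widens under $Dg$) and $Dg^2(\Eone) \cc Dg(\Eone) \cc \Eone$ (the cone around $\Ewsg \oplus \Ecug$ narrows under $Dg$), item~(1) uses the \emph{narrower} versions $\Aone$ and $Dg^2(\Eone)$, while item~(2) uses the \emph{wider} versions $Dg^2(\Aone)$ and $\Eone$. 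So you cannot ``enlarge $\Aone$ and $\Eone$'' to catch $X$ without simultaneously enlarging the very sets that appear in~(2). Your thin-cone strategy for~(2) is fine, but then for a general $X = X_{ss} + X_{ws} + X_{cu}$ with both $X_{ss}$ and $X_{cu}$ comparable to $X_{ws}$, the vector $X$ lies in neither a thin cone around $\Esg$ nor a thin cone around $\Ewsg \oplus \Ecug$, and item~(1) fails.

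The paper's proof avoids this tension entirely by an explicit construction rather than an abstract thin-then-widen argument. Using the decomposition $v = \vsss + \vws + \vcu$, one first fixes a single constant $C > 1$ such that $\|\vsss\|^2 + \|\vcu\|^2 < C\|\vws\|^2$ holds both for $v = X(x)$ and for $v = Dg^{-2}(X(x))$; this is exactly where the transversality hypothesis on $X$ enters and compactness of $M$ gives uniformity. One then \emph{defines} the cones by explicit quadratic forms, e.g.\ $v \in Dg^2(\Aone) \Leftrightarrow \|\vcu\|^2 \le C\|\vws\|^2 + 2\|\vsss\|^2$ and $v \in \Eone \Leftrightarrow \|\vsss\|^2 \le C\|\vws\|^2 + 2\|\vcu\|^2$. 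These cones are deliberately \emph{wide} in the $\Ewsg$ direction (coefficient $C$ large), so that $X$ and $Dg^{-2}X$ fall inside them by construction, giving item~(1); but on the hyperplane $\Essg \oplus \Ecug$ where $\vws = 0$ the large coefficient $C$ disappears and the remaining inequalities force $v = 0$, giving item~(2). The point you were missing is that the cone shapes should be adapted to $X$ via the constant $C$, not chosen uniformly thin and then adjusted afterward.
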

\begin{remark}
    Here, the associations mean that
    \[
        \Esg \cc \Aone \cc Dg(\Aone), \quad \Ecug \cc \Aone^*, \quad
        \Ewsg \oplus \Ecug \cc Dg(\Eone) \cc \Eone,
        \quad
        \Essg \cc \Eone^*.
    \] \end{remark}
\begin{proof}
    For this proof, we can decompose a tangent vector $v \in TM$
    with respect to the splitting
    $TM = \Essg \oplus \Ewsg \oplus \Ecug$
    and write the components as
    $v = \vsss + \vws + \vcu.$
    By assumption, if $v = X(x)$ for some $x \in M,$
    then $\vws$ is non-zero.
    Therefore, there is a constant $C > 1$ such that
    $ \| \vsss \| ^2 + \| \vcu \| ^2 < C \| \vws \| ^2$
    for every such vector $v = X(x)$ in the vector field.
    Up to replacing $C$ by a larger constant, we can also assume that this
    inequality holds for any vector of the form
    $v = Dg^{-2}(X(x))$ where $x \in M.$
    With $C$ determined, 
    define the cone fields $\Aone$ and $\Eone$ by
    \[
        v \in Dg^2(\Aone)
        \quad \Leftrightarrow \quad
        \| \vcu \| ^2 \le C \| \vws \| ^2 + 2 \| \vsss \| ^2
    \]
    and
    \[
        v \in \Eone
        \quad \Leftrightarrow \quad
        \| \vsss \| ^2 \le C \| \vws \| ^2 + 2 \| \vcu \| ^2.
    \]
    We assume that the metric on $M$ is adapted to the splitting
    so that the inclusions $\Aone \cc Dg(\Aone)$ and $Dg(\Eone) \cc \Eone$
    both hold.
    It then follows that $X \subof Dg^2(\Aone) \cc \Aone$ and
    $Dg^{-2}(X) \subof \Eone.$
    Together, these imply that $X \subof \Aone \cap Dg^2(\Eone).$

    A vector $v \in Dg^2(\Aone) \cap \Eone \cap (\Essg \oplus \Ecug)$
    has $ \| \vws \| = 0$ and therefore
    \[
        \| \vcu \| ^2 \le 2 \| \vsss \| ^2
        \qandq
        \| \vsss \| ^2 \le 2 \| \vcu \| ^2,
    \]
    showing that $v$ is a zero vector.
\end{proof}
Let $\Aone$ and $\Eone$ be as in the lemma.
As before, define $\Bone = Dg^2(\Aone).$
In the original proof in \cref{sec:construction}, we defined $\Cone$ as $Dg(\Bone^*).$
Here, we need to be slightly more careful.

\begin{lemma} \label{lemma:avoidss}
    There is a cone family $\Cone$ associated to $\Ecug$ such that
    if $0 \ne u \in \Bone \cap \Eone$
    and
    $v \in \Cone,$
    then
    $u + v \notin \Essg.$
\end{lemma}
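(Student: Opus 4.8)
The plan is to choose $\Cone$ to be a forward‑invariant cone family around $\Ecug$ that is narrow enough to be useless for cancelling the weak–stable component of a vector of $\Bone\cap\Eone$. The whole point is that \cref{lemma:xgiven}(2) says $\Bone\cap\Eone$ meets $\Essg\oplus\Ecug$ only in the zero section, so every nonzero $u\in\Bone\cap\Eone$ has nonzero $\Ewsg$‑component — and, by compactness, uniformly so. Concretely, I would first fix a metric adapted so that $\Essg\oplus\Ewsg\oplus\Ecug$ is orthogonal and write $w=w_{ss}+w_{ws}+w_{cu}$ for the components of a vector $w\in TM$ in this splitting. The set of unit vectors lying in $\Bone\cap\Eone$ is a closed, hence compact, subset of the unit tangent bundle, and on it the continuous function $w\mapsto\|w_{ws}\|$ is everywhere positive by \cref{lemma:xgiven}(2); therefore it has a positive minimum $\kappa>0$, so that $\|u_{ws}\|\ge\kappa\|u\|$, and in particular $\|u_{cu}\|\le\kappa^{-1}\|u_{ws}\|$, for every $u\in\Bone\cap\Eone$.

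Next I would build $\Cone$. Since $\Esg\oplus_<\Ecug$ is a dominated splitting, there is a cone family $\Cone_0$ with $\Ecug\cc Dg(\Cone_0)\cc\Cone_0$. The iterates $\Cone_n=Dg^n(\Cone_0)$ are still forward‑invariant and still associated to $\Ecug$ (one checks $\Ecug\cc Dg(\Cone_n)=\Cone_{n+1}\cc\Cone_n$ by applying $Dg^n$ to the corresponding relations for $\Cone_0$), and they collapse onto the dominating bundle: $\bigcap_n\Cone_n=\Ecug$. Hence, fixing $\ep$ with $0<\ep<\kappa$, I can take $n$ large enough that $\Cone:=\Cone_n$ is contained in the cone $\{\,v:\|v_{ss}\|^2+\|v_{ws}\|^2\le\ep^2\|v_{cu}\|^2\,\}$; in particular every $v\in\Cone$ satisfies $\|v_{ws}\|\le\ep\|v_{cu}\|$, and a nonzero $v\in\Cone$ has $v_{cu}\ne0$.

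Finally I would conclude by contradiction. Suppose $0\ne u\in\Bone\cap\Eone$, $v\in\Cone$, and $u+v\in\Essg$. Then the $\Ewsg$‑ and $\Ecug$‑components of $u$ and $v$ cancel, so $u_{ws}=-v_{ws}$ and $u_{cu}=-v_{cu}$, whence
\[
    \|u_{ws}\|=\|v_{ws}\|\le\ep\|v_{cu}\|=\ep\|u_{cu}\|\le\ep\kappa^{-1}\|u_{ws}\| .
\]
As $\ep\kappa^{-1}<1$, this forces $u_{ws}=0$, contradicting $\|u_{ws}\|\ge\kappa\|u\|>0$; hence $u+v\notin\Essg$. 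The step I expect to require the most care is the compactness argument that promotes \cref{lemma:xgiven}(2) to the scale‑invariant lower bound $\|u_{ws}\|\ge\kappa\|u\|$, since this is exactly what makes the cancellation estimate above robust; the only other point needing attention is the verification that $\Cone_n$ is genuinely a cone family associated to $\Ecug$ in the $\cc$‑sense and can be made as narrow as we please, which is the standard fact that iterating an invariant cone of a dominated splitting drives it onto the dominating bundle.
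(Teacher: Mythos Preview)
Your proof is correct and follows essentially the same strategy as the paper: both take $\Cone=Dg^n(\text{initial cone})$ for $n$ large and use compactness together with \cref{lemma:xgiven}(2) to rule out cancellation into $\Essg$. The only difference is cosmetic---the paper argues by sequential compactness (if no $n$ works, pass to a limit and land in $\Bone\cap\Eone\cap(\Essg\oplus\Ecug)$), whereas you extract an explicit lower bound $\kappa$ on the $\Ewsg$-component and then choose $\ep<\kappa$; these are the qualitative and quantitative faces of the same compactness fact.
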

\begin{remark}
    Here,
    we assume $u$ and $v$ are both in the same tangent space $T_x M$
    in order for the sum $u + v$ to be defined.
\end{remark}
\begin{proof}
    We claim that $\Cone$ can be defined as $Dg^n(\Bone^*)$ for some $n \ge 1.$
    If not, then
    there are sequences $\{u_n\}$ and $\{v_n\}$ of non-zero vectors
    such that for all $n \ge 1$
    \[
        u_n \in \Bone \cap \Eone, \quad
        v_n \in Dg^n(\Bone^*), \qandq
        u_n + v_n \in \Essg.
    \]    %
    Up to rescaling the vectors, assume their norms satisfy
    max $\{ \| u_n \| , \| v_n \| \} = 1.$
    By taking subsequences, we can reduce to the case
    where both sequences converge.
    Write
    $u = \lim_{n \to \infty} u_n \in \Bone \cap \Eone,$
    and
    $v = \lim_{n \to \infty} v_n \in \Ecug.$
    Then as $u + v \in \Essg,$ it follows that
    $u = (u + v) - v$ is non-zero and lies in both
    $\Bone \cap \Eone$ and $\Essg \oplus \Ecug.$
    This contradicts the previous lemma.
\end{proof}

We henceforth assume $\Cone$ is as in the lemma.
Let $E^+$ and $E^-$ denote the two connected components of
$\Ewsg \sans 0.$
Then $E^+ + \Essg$ and $E^- + \Essg$ are the two connected components of
$\Esg \sans \Essg.$
The intersection $\Aone \cap Dg(\Eone)$
is a neighbourhood of the one-dimensional bundle $\Ewsg$
and is transverse to the codimension one bundle
$\Essg \oplus \Ecug.$
Therefore $(\Aone \cap Dg(\Eone)) \sans 0$
has two connected components which we denote by $\Aone^+$ and $\Aone^-$
Here, $\Aone^+$ is the component containing $E^+.$
Similarly, $(\Bone \cap \Eone) \sans 0$
has two connected components $\Bone^+$ and $\Bone^-.$
Under these new definitions, $\Aone^+$ is not necessarily a subset
of $Dg(\Aone^+).$ However, the following properties still hold:
\begin{enumerate}
    \item $\Aone^+ \cc \Bone^+$ and $Dg(\Aone^+) \cc \Bone^+,$
    \item
    each of $\Aone^+$ and $\Bone^+$ is closed under addition
    and under scaling by positive real numbers,
    \item
    the intersection $\Bone^+ \cap \Cone$ is empty.
\end{enumerate}
These are the only properties of $\Aone^+$ and $\Bone^+$
that were used in the earlier proofs.
However, to adapt the proof of \cref{lemma:extendcu} to this new setting, we will
also need the following corollary of \cref{lemma:avoidss}.

\begin{cor} \label{cor:bcss}
    If $u \in \Bone^+$ and $v \in \Cone,$ then $u + v \notin \Essg.$
\end{cor}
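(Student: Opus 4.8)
The plan is to deduce this immediately from Lemma \ref{lemma:avoidss} by checking that the hypotheses of that lemma are met. Lemma \ref{lemma:avoidss} says that if $0 \neq u \in \Bone \cap \Eone$ and $v \in \Cone$, then $u + v \notin \Essg$. So it suffices to observe two things: first, that $\Bone^+ \subof \Bone \cap \Eone$, and second, that the statement is trivially true when $u = 0$.

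For the first point, recall that $\Bone^+$ was defined as one of the two connected components of $(\Bone \cap \Eone) \sans 0$; in particular $\Bone^+ \subof \Bone \cap \Eone$ and every vector of $\Bone^+$ is non-zero. So any $u \in \Bone^+$ satisfies the hypothesis ``$0 \neq u \in \Bone \cap \Eone$'' of Lemma \ref{lemma:avoidss}, and hence $u + v \notin \Essg$ for all $v \in \Cone$. For the second point: if $u = 0$ then $u + v = v \in \Cone$, and since $\Cone$ is a cone family associated to $\Ecug$, we have $\Ecug \cc \Cone$ and $\Cone \cap \Essg = 0$ (indeed $\Cone(x) \cap \Essg(x) = 0$ because $\Cone$ is bounded in angle away from $\Essg \oplus \Ewsg \supof \Essg$); thus $v = u + v$ cannot be a non-zero vector of $\Essg$. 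Combining the two cases gives the corollary.

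There is essentially no obstacle here — the corollary is a bookkeeping consequence packaging Lemma \ref{lemma:avoidss} in the form needed for the later adaptation of Lemma \ref{lemma:extendcu}. The only thing to be slightly careful about is the degenerate case $u = 0$, which is not covered by Lemma \ref{lemma:avoidss} as literally stated (that lemma requires $u \neq 0$), so one should mention it explicitly rather than silently invoke the lemma.
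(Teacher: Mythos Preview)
Your proof is correct and follows exactly the paper's approach: the corollary is stated in the paper as an immediate consequence of Lemma~\ref{lemma:avoidss}, and your first point (that $\Bone^+$ is by definition a connected component of $(\Bone \cap \Eone)\setminus 0$, hence consists of non-zero vectors in $\Bone \cap \Eone$) is precisely the observation needed. Your second case $u=0$ is vacuous and can be dropped, since you yourself note that every vector of $\Bone^+$ is non-zero.
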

Using the notation for adding subsets of $TM$ introduced in \cref{sec:construction},
we can rewrite this statement as
$(\Bone^+ + \Cone) \cap \Essg = \varnothing$
from which it follows that
any vector in $\Bone^+ + \Cone + \Essg$ is non-zero.

As before, use the vector field $X \subof \Aone^+$
to define a flow $\sig_t.$
After rescaling $X,$
that is, replacing $X$ by $\ep X$ for a sufficiently small $\ep,$
the composition $g_t = \sig_t \circ g$ satisfies
$Dg_t \Eone \subof \Eone$ for all $|t| \le 1$
as well as all of the inclusions for $g_t$ given in \cref{sec:construction}.
Consequently,
$Dg_t(\Aone^+) \cc \Bone^+$
for all $|t| \le 1.$

We can now adapt the proofs in \cref{sec:construction} to this setting.

\begin{itemize}
    \item \textbf{Adapting the proof of \cref{lemma:extendcu}:}
    The original proof (using the new cone families)
    shows that $u_2$ is a sum of a vector in $\Bone^+$
    with a vector in $\Cone,$ and therefore $u_2 \notin \Essg$
    by \cref{cor:bcss}
    Consequently as $n_j \to +\infty,$
    $\{w^{n_j}\}$ converges to a vector
    outside of $\Essg \ti \bbR.$

    \noindent
    This adaptation of the proof establishes an additional 
    property that we will need to use later:
    \begin{quote}
        for all $(x, z) \in M \ti (0, h^2(a)],$ \
        $\Ecuf(x, z)$ is transverse to $\Essg(x) \ti \bbR.$ \end{quote}
    \item
    \textbf{The proofs of lemmas \ref{lemma:extendu} and \ref{lemma:trans}}
    are unchanged.
    \item
    \textbf{The changes to \cref{lemma:allpos}}
    are significant enough that for clarity
    we state and prove a new version of the lemma below.
\end{itemize}
\begin{lemma} \label{lemma:higherallpos}
    Consider a point 
    $(x, z) \in M \ti (0, h^2(a)]$ and a vector
    $(u,v) \in \Ecuf(x, z)$ such that $u \in \Esg(x).$
    Then
    $u \in E^+ + \Essg$
    if and only if $v > 0.$
\end{lemma}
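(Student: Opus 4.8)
The plan is to mirror the proof of \cref{lemma:allpos} almost verbatim, with the two components $E^+$ and $E^-$ of $\Esg \sans 0$ replaced by the two components $E^+ + \Essg$ and $E^- + \Essg$ of $\Esg \sans \Essg.$

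First I would set up the auxiliary function $\hat u : M \ti (0, h^2(a)] \to \Esg,$ where $\hat u(x,z)$ is the vector $u \in \Esg(x)$ with $(u,1) \in \Ecuf(x,z).$ As in the original proof, \cref{lemma:trans} and the remark following it show that $\Ecuf(x,z) \cap (\Esg(x) \ti \bbR)$ is a line transverse to $T_x M \ti 0,$ so $\hat u(x,z)$ exists, is unique, and depends continuously on $(x,z).$ The new ingredient is the additional transversality recorded in the adaptation of \cref{lemma:extendcu}: because $\Ecuf(x,z)$ is transverse to $\Essg(x) \ti \bbR,$ this line meets $\Essg(x) \ti \bbR$ only at the origin, hence $\hat u(x,z) \notin \Essg(x).$ So $\hat u$ takes values in $\Esg \sans \Essg,$ and since $M \ti (0, h^2(a)]$ is connected, its image lies in a single one of the components $E^+ + \Essg,\ E^- + \Essg;$ it therefore suffices to check that $\hat u$ takes one value in $E^+ + \Essg.$

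To locate the component I would reuse the test point from the original proof: take $(x_2,z_2) \in M \ti (h^3(a), h^2(a)),$ set $(u_2,v_2) = (\hat u(x_2,z_2),1),$ and pull back by $f^{-2}$ to $(x,z) \in M \ti (h(a),a)$ with $(u,v) = Df^{-2}(u_2,v_2),$ so that $v > 0.$ The computation in the (adapted) proof of \cref{lemma:extendcu} writes $u_2 = Dg_{t_1}(u_C) + u_B$ with $Dg_{t_1}(u_C) \in \Cone$ and $u_B \in \Bone^+.$ Suppose for contradiction that $u_2 \in E^- + \Essg.$ Since $\Essg$ is a linear subspace, $-u_2 \in E^+ + \Essg,$ so $-u_2 = e + s$ with $e \in E^+ \subof \Bone^+$ and $s \in \Essg.$ Then
\[
    0 \ =\ u_2 + e + s \ =\ (u_B + e)\ +\ Dg_{t_1}(u_C)\ +\ s ,
\]
and $u_B + e \in \Bone^+$ because $\Bone^+$ is closed under addition; hence $0 \in \Bone^+ + \Cone + \Essg.$ This contradicts the consequence of \cref{cor:bcss} that $(\Bone^+ + \Cone) \cap \Essg = \varnothing$ — and hence that every vector of $\Bone^+ + \Cone + \Essg$ is non-zero. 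Therefore $u_2 \in E^+ + \Essg,$ so $\hat u$ takes all its values in $E^+ + \Essg.$

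Finally I would conclude: any $(u,v) \in \Ecuf(x,z)$ with $0 \ne u \in \Esg(x)$ is a non-zero scalar multiple $\lam \cdot (\hat u(x,z),1),$ so $v > 0$ iff $\lam > 0$ iff $u$ lies in the same component of $\Esg \sans \Essg$ as $\hat u(x,z),$ i.e.\ iff $u \in E^+ + \Essg;$ the remaining case $u = 0$ forces $(u,v) = 0$ and the equivalence holds vacuously. I expect the only genuinely delicate point, relative to \cref{lemma:allpos}, to be the contradiction step: in dimension one one could use $\Bone^+ \cap \Cone = \varnothing$ together with $-u_2 \in \Bone^+,$ but here $-u_2$ need not lie in $\Bone^+$ at all, so the argument must instead invoke the stronger, $\Essg$-aware statement $(\Bone^+ + \Cone) \cap \Essg = \varnothing$ furnished by \cref{lemma:avoidss} and \cref{cor:bcss}. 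Everything else should go through as a transcription of the proof of \cref{lemma:allpos}.
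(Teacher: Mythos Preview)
Your proposal is correct and follows the paper's own proof essentially step for step: the same definition of $\hat u$, the same appeal to \cref{lemma:trans} and to the $\Essg \ti \bbR$ transversality from the adapted \cref{lemma:extendcu}, the same test point $(x_2,z_2)$, and the same contradiction via $\Bone^+ + \Cone + \Essg$ containing no zero vector. The only cosmetic difference is that you write $-u_2 = e + s$ explicitly and group as $(u_B + e) + Dg_{t_1}(u_C) + s$, whereas the paper keeps $-u_2$ intact and simplifies $\Bone^+ + (E^+ + \Essg) + \Cone$ to $\Bone^+ + \Essg + \Cone$ using $\Bone^+ + E^+ = \Bone^+$; these are the same computation.
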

\begin{proof}
    Define a function
    $\hat u : M \ti (0, h^2(a)] \to \Esg \sans \Essg$ 
    as follows:
    for each point $(x, z) \in M \ti (0, h^2(a)],$
    let $\hat u(x, z)$ be the unique $u \in \Esg(x)$
    such that $(u, 1) \in \Ecuf(x, z).$
    \Cref{lemma:trans} shows that such a vector exists and is unique.
    As we noted when adapting \cref{lemma:extendcu},
    $\Ecuf(x, z)$ is transverse to $\Essg(x) \ti \bbR,$
    and therefore $\hat u(x, z) \notin \Essg.$
    By continuity, 
    $\hat u$ 
    only takes values in a single connected component
    of $\Esg \sans \Essg,$ 
    and so to prove the lemma
    it suffices to show that $\hat u(x, z) \in E^+ + \Essg$ for a single point in
    $M \ti (0, h^2(a)].$

    Consider a point $(x_2, z_2)$ in the subset $M \ti (h^3(a), h^2(a))$
    and define
    \[    
        u_2 = \hat u(x_2, z_2) \in \Esg(x_2) \qandq v_2 = 1
    \]        
    so that $(u_2, v_2) \in \Ecuf(x_2, z_2).$
    From these, define
    \[
        (x, z) = f^{-2}(x_2, z_2)
        \qandq
        (u, v) =  Df^{-2}(u_2, v_2).
    \]
    It follows that 
    $(x, z) \in M \ti (h(a), a), (u, v) \in \Ecuf(x, z),$ and $v > 0.$
    Recall from the remark after
    the proof of \cref{lemma:extendcu}
    that
    $u_2 = Dg_{t_1}(u_C) + u_B$
    with $Dg_{t_1}(u_C) \in \Cone$ and $u_B \in \Bone^+.$
    If $u_2 \in E^- + \Essg,$ then
    \[
        0 \ = \ u_B - u_2 + Dg_{t_1}(u_C)
        \ \in \ \Bone^+ + (E^+ + \Essg) + \Cone.
    \]
    This is a contradiction, since
    \[
        \Bone^+ + (E^+ + \Essg) + \Cone
        \ = \ (\Bone^+ + E^+) + \Essg + \Cone
        \ = \ \Bone^+ + \Essg + \Cone
    \]
    does not have a zero vector.
    Therefore, $u_2 \in E^+ + \Essg.$
\end{proof}

With this established, we can recover the proof of dynamical incoherence
in \cref{sec:incoherent} with only minor changes.
In fact, the only changes are:
\begin{itemize}
    \item \textbf{The proof of \cref{lemma:falling}} now uses \cref{lemma:higherallpos}
    in place of \cref{lemma:allpos}.
    \item
    \textbf{In the two places where $E^-$ is used}
    (first in the definition of falling curve
    and then in the proof of dynamical incoherence),
    the set $E^-$ is replaced with $E^- + \Essg.$
\end{itemize}
This completes the proof of \cref{prop:highswitch}.

\medskip{}

In order to prove \cref{thm:multiswitch}, we establish two additional properties
of the construction in \cref{prop:highswitch}. The first concerns orientability.

\begin{addendum} \label{addendum:orient}
    In \cref{prop:highswitch}, if $\Essg$ has an orientation preserved by
    $g,$ then $f$ can be constructed so that it preserves the orientation
    of $\Esf.$
\end{addendum}
\begin{proof}
    Consider $f$ as a diffeomorphism of $M \ti [-1, 1]$ before we
    glue together the boundary components to produce a diffeomorphism on
    $M \ti S^1.$
    It is clear on the subset $M \ti \{0\}$ that $\Esf = \Essg \ti \bbR$ is oriented,
    and this extends to an orientation on all of $M \ti [-1, 1].$
    The problem is that the gluing which identifies
    $M \ti \{-1\}$ with $M \ti \{1\}$
    might reverse the orientation of $\Esf$ and therefore
    produce a stable bundle which is not orientable.
    (In fact, in the original construction in \cite{RHRHU-nondyn}
    it is easy to see that this is the case.)

    To avoid this problem, we first extend $f$ to a diffeomorphism of
    $M \ti [-1, 3]$ by requiring that if
    $f(x, z) = (x_1, z_1)$ for $(x, z) \in M \ti [-1,1],$ then
    $f(x, z+2) = (x_1, z_1 + 2).$
    Then identify $M \ti \{-1\}$ with $M \ti \{3\}$ to produce a map on $M \ti S^1.$
    Under this construction, $\Esf$ is oriented.
\end{proof}
\begin{addendum} \label{addendum:onesplit}
    In \cref{prop:highswitch}, if $\Esg$
    has a splitting into one-dimensional subbundles
    each with orientation preserved by $g,$
    then $f$ can be constructed so that its stable bundle $\Esf$
    has the same property.
\end{addendum}
\begin{proof}
    Write the dominated splitting of $g$ as
    \[
        TM = \Es_d \oplus_< \cdots \oplus_< \Es_2 \oplus_< \Es_1
        \oplus_< \Eccg \oplus_< \Eug.
    \]
    Here, we use $\Eccg$ to denote the center bundle of $g$
    and call it the ``true center'' as we will group it together 
    with some of the stable bundles to produce higher-dimensional centers.
    Specifically, for each $1 \le k \le d,$
    define a 4-way splitting of $TM$ by
    \[
        \Essg = \Es_d \oplus \cdots \oplus \Es_{k+1},
        \qquad
        \Ewsg = \Es_k,
        \qquad
        \Ecg = \Es_{k+1} \oplus \cdots \oplus \Es_1 \oplus \Eccg,
        \qquad
        \Eug = \Eug.
    \]
    Our goal is now to show that the proof of \cref{prop:highswitch}
    applies for each value of $k$ and with the exact same
    definition of $f$ in each case.
    This will imply for all $1 \le k \le d$
    that $f$ has a 3-way partially hyperbolic splitting
    with a stable bundle of dimension $d - k + 1.$
    This is equivalent to $f$ having a $d$-dimensional stable bundle
    that splits into one-dimensional subbundles.
    The orientability of these subbundles follows from
    the previous addendum.

    Fix constants $0 < \lam < \eta < 1 < \mu$ independent of $k$ such that
    \[
        \lam < \| Dg v^s \| < \eta \qandq \mu < \| Dg v^u \|
    \]
    for all unit vectors $v^s \in \Es_d \oplus \cdots \oplus \Es_1$ and $v^u \in \Eug.$
    Then as in \cref{sec:construction},
    define functions $h$ and $\tau$ based on these constants.

    We also need to make a single choice of vector field $X.$
    For each $k,$ let $X_k$ be the (not necessarily smooth) unit vector field
    pointing in the positive direction of $\Es_k.$
    The sum $X_1 + \cdots + X_d$ is a vector field which is transverse to
    \[
        \Es_d \oplus \cdots \oplus E_{k+1} \oplus E_{k-1} \oplus \cdots \oplus \Es_1
        \oplus \Eccg \oplus \Eug
    \]
    for every $k.$
    We can approximate $X_1 + \cdots + X_d$ by a smooth vector field
    $X$ which has the same transversality property.
    The proof of \cref{prop:highswitch}
    replaces $X$ with a rescaling $\ep X$ for some small positive $\ep,$
    so that the resulting flow $\sig_t$ has nice properties
    for $|t| \le 1.$
    Here, we can choose $\ep$ small enough so that same rescaled
    vector field works in the proof of \cref{prop:highswitch}
    for all $1 \le k < d.$

    With $h$, $\tau$, and $X$ specified
    and with $X$ determining $\sig_t$ and therefore
    $g_t = \sig_t \circ g,$ we see that the exact same diffeomorphism $f$
    given by the formula
    $f(x, z) = ( g_{\tau(z)}(x), h(z) )$
    works for all $1 \le k \le d.$
\end{proof}

\section{Applying multiple switches} \label{sec:multi} 

With \cref{prop:highswitch} established, we use it to prove \cref{thm:multiswitch}.

\begin{proof}
    [Proof of \cref{thm:multiswitch}]
    Let $g_0 : M_0 \to M_0$ be a partially hyperbolic diffeomorphism
    where the stable bundle
    has an invariant dominated splitting into one-dimen\-sio\-nal subbundles.
    We write this as
    \[
        \Es_{g_0} = \Es_d \oplus_< \cdots \oplus_< \Es_2 \oplus_< \Es_1.
    \]
    Assume $g_0$ preserves an orientation of each $\Es_k.$
    Repeatedly applying \cref{prop:highswitch} with addendum \ref{addendum:onesplit},
    we construct a sequence of partially hyperbolic maps
    $g_k : M_k \to M_k$
    where $M_k = M_{k-1} \ti S^1 = M_0 \ti \bbT^k$ for $0 < k \le d.$

    On the submanifold $M_0 \ti \{0\} \subof M_0 \ti \bbT^k,$
    \[
        \Es_{g_k} = \big(\Es_d \oplus \cdots \oplus \Es_{k+1} \big) \ti T_0 \bbT^k,
        \quad
        \Ec_{g_k} = \big(\Es_k \oplus \cdots \oplus \Es_1 \oplus \Ec_{g_0} \big) \ti 0,
        \qandq
        \Eu_{g_k} = \Eu_{g_0} \ti 0.
    \]
    Hence, if we take $f = g_d,$ then
    \[
        \Esf = 0 \ti T_0 \bbT^d,
        \quad
        \Ecf = \big( \Es_{g_0} \oplus \Ec_{g_0} \big) \ti 0
        \qandq
        \Euf = \Eu_{g_0}.
    \]
    The last application of \cref{prop:highswitch},
    going from $g_{d-1}$ to $g_d,$
    implies that $f = g_d$ is dynamically incoherent
    and so the conclusions of \cref{thm:multiswitch} hold.
\end{proof}

\medskip

\acknowledgement
This research was partially funded by the Australian Research Council.
The author thanks Rafael Potrie, Davide Ravotti,
Federico Rodri\-guez-Hertz, Jana Rodriguez-Hertz, and Ra\'ul Ures 
for helpful discussions.


\bibliographystyle{alpha}
\bibliography{dynamics}

\end{document}